\def\today{\ifcase\month\or
  January\or February\or March\or April\or May\or June\or
  July\or August\or September\or October\or November\or December\fi
  \space\number\day, \number\year}
\DeclareMathOperator{\sgn}{\mathrm{sgn}}
\DeclareMathOperator{\supp}{\mathrm{supp}}
 \newtheorem{theorem}{Theorem}
 \newtheorem{lemma}[theorem]{Lemma}
 \newtheorem{proposition}[theorem]{Proposition}
 \newtheorem{corollary}[theorem]{Corollary}
 \theoremstyle{definition}
 \theoremstyle{remark}
 \newtheorem{remark}[theorem]{Remark}
  \newcommand{\mc}{\mathcal}
 \newcommand{\A}{\mc{A}}
 \newcommand{\E}{\mc{E}}
 \newcommand{\C}{\mathbb{C}}
 \newcommand{\R}{\mathbb{R}}
 \newcommand{\ds}{\text{\rm d}s}
 \newcommand{\dt}{\text{\rm d}t}
  \renewcommand{\d}{\text{\rm d}}
 \newcommand{\du}{\text{\rm d}u}
 \newcommand{\dw}{\text{\rm d}w}
 \newcommand{\dx}{\text{\rm d}x}
 \newcommand{\dy}{\text{\rm d}y}
\newcommand{\im}{{\rm Im}\,}
\newcommand{\re}{{\rm Re}\,}
\begin{document}

\title[Fourier optimization and prime gaps]{Fourier optimization and prime gaps}
\author[Carneiro, Milinovich, and Soundararajan]{Emanuel Carneiro, Micah B. Milinovich, and Kannan Soundararajan}
\subjclass[2010]{41A30, 11M06, 11M26, 11N05}
\keywords{Bandlimited functions, Fourier uncertainty, prime gaps, Riemann hypothesis.}
\address{IMPA - Instituto Nacional de Matem\'{a}tica Pura e Aplicada - Estrada Dona Castorina, 110, Rio de Janeiro, RJ 22460-320, Brazil.}
\email{carneiro@impa.br}
\address{Department of Mathematics, University of Mississippi, University, MS 38677, USA.}
\email{mbmilino@olemiss.edu}
\address{Department of Mathematics, Stanford University, Stanford, CA 94305, USA.}
\email{ksound@stanford.edu}

\date{\today}
\allowdisplaybreaks
\numberwithin{equation}{section}

\maketitle

\begin{abstract}
We investigate some extremal problems in Fourier analysis and their connection to a problem in prime number theory. In particular, we improve the current bounds for the largest possible gap between consecutive primes assuming the Riemann hypothesis.
\end{abstract}

\section{Introduction}

In this paper we study a new set of extremal problems in Fourier analysis, motivated by a problem in prime number theory. 
These problems (which will be described shortly) are of the kind where one prescribes some constraints for a function and its Fourier transform, and then wants to optimize a certain quantity. When available, a solution to such a problem usually requires two main ingredients: a tool to prove optimality and a tool to construct an extremal function. A classical example in approximation theory is the problem of finding the best $L^1(\R)$-approximation of real-valued functions by bandlimited functions (i.e. functions with compactly supported Fourier transforms). For the two-sided problem (i.e. unrestricted approximation), one usually works with the so called {\it extremal signatures} to establish optimality, whereas for the one-sided problem (in which one is interested in majorizing or minorizing a given function) the Poisson summation formula is useful as a tool to prove optimality. For an account of such methods see, for instance, \cite{CLV, Na, V} and the references therein. Optimal bandlimited majorants and minorants have several applications to inequalities in analysis and number theory, for instance in connection to the theory of the Riemann zeta-function, e.g. \cite{CCLM, CCM, CChi, CS}. Slightly different extremal problems appear in the work \cite{LLS}, in connection with the question of bounding the least quadratic nonresidue modulo a prime. Another example of a Fourier optimization problem was proposed by Cohn and Elkies \cite{CE}, in connection to the sphere packing problem. This recently attracted considerable attention with its resolution in dimensions $8$ and $24$ (see \cite{CKMRV, Vi}).

\smallskip

As we see below, the Fourier optimization problems considered here are simple enough to be stated in very accessible terms but rather delicate in the sense that the usual tools in the literature to prove optimality and construct extremal functions are not particularly helpful. 
While we have been unable to determine explicitly the solutions to our optimization problems, we are able to make progress on the existence and uniqueness of extremizers, and to establish good upper and lower bounds for the values of the sharp constants.  
In addition, we establish a connection between these extremal problems in Fourier analysis and the problem of bounding the largest possible gap between 
consecutive primes (assuming the Riemann hypothesis).

\subsection{Fourier optimization problems} For $F \in L^1(\R)$, we let
\[
\widehat{F}(t) = \int_{-\infty}^{\infty} e^{-2 \pi i x t }\,F(x)\,\dx
\]
denote the Fourier transform of $F$. We also let $x_+ := \max\{x,0\}$ and $1\leq A \leq \infty$ be a given parameter (note that we include the possibility that $A=\infty$), 
and we consider the following problems.

\smallskip

\noindent {\it {\bf Extremal problem 1:} Given $1\le A <\infty$, find 
\begin{equation}\label{Extremal_Problem_1}
{\mc C}(A) := \sup_{\substack{F \in \mc{A} \\ F \neq 0}} \frac{1}{\| F\|_1} \Big ( |F(0)| - A\int_{[-1,1]^c} \big|\widehat{F}(t)\big|\,\dt \Big)\,,
\end{equation}
where the supremum is taken over the class $\mathcal{A}$ of continuous functions $F:\R \to \C$, with $F \in L^1(\R)$.
In the case $A=\infty$, determine 
\begin{equation}\label{Extremal_Problem_1_infty}
{\mc C}(\infty) =  \sup_{\substack{F \in \E  \\ F \neq 0}} \frac{|F(0)|}{\| F\|_1}, 
\end{equation}
where the supremum is over the subclass $\E \subset \mathcal{A}$ of continuous functions $F:\R \to \C$, with $F \in L^1(\R)$ and $\supp\big( \widehat{F}\big) \subset [-1,1]$.}
\smallskip

\noindent {\it {\bf Extremal problem 2:} Given $1\le A < \infty$, find 
\begin{equation}\label{Extremal_Problem_2}
{\mc C}^+(A) := \sup_{\substack{\ \ F \in \mc{A}^+ \\ F \neq 0}}  \frac{1}{\|F\|_{1}} \Big( F(0) - A\int_{[-1,1]^c} \big(\widehat{F}(t)\big)_+\,\dt \Big) \,,
\end{equation}
where the supremum is taken over the class $\mathcal{A}^{+}$ of even and continuous functions $F:\R \to \R$, with $F \in L^1(\R)$.
In the case $A=\infty$, determine 
\begin{equation}\label{Extremal_Problem_2_infty}
{\mc C}^+(\infty) =  \sup_{\substack{\ F \in \mc{E}^+ \\ F \neq 0}} \frac{F(0)}{\| F\|_1}, 
\end{equation}
where the supremum is over the subclass $\mc{E}^+ \subset \mathcal{A}^+$ of even and continuous functions $F:\R \to \R$, with $F \in L^1(\R)$ and $\widehat{F}(t) \leq0$ for $|t| \geq 1$.
}

\smallskip

There has been some previous works in connection to problem \eqref{Extremal_Problem_1_infty} and its analogue for trigonometric polynomials, see for instance \cite{AKP, Gor, Tai}. The current best numerical upper and lower bounds for ${\mc C}(\infty)$, reviewed in \eqref{April14_10:41am} below, are due to Gorbachev \cite[Theorem 3]{Gor2}. We were not able to find any mention to the other problems in the literature. {\it If one further imposes the condition that $F$ is nonnegative on $\R$}, then \eqref{Extremal_Problem_1_infty} reduces to a folkloric problem for bandlimited functions while \eqref{Extremal_Problem_2_infty} reduces to the Cohn-Elkies problem \cite[Theorem 3.1]{CE} in dimension $1$. In both cases Poisson summation shows that the required maximum is $1$, being attained by any constant multiple of the Fej\'{e}r kernel $F(x) =  \big(\sin(\pi x)/(\pi x) \big)^2$. Classical interpolation formulas of Vaaler \cite[Theorem 9]{V} show that these are indeed the unique extremizers for this simplified version of \eqref{Extremal_Problem_1_infty}, whereas this simplified version of \eqref{Extremal_Problem_2_infty} admits other extremizers (see \cite[Section 5]{CE}).

\smallskip

We restricted the parameter $A$ to the range $1 \leq A \leq \infty$ because in the range $0<A<1$ the corresponding problems \eqref{Extremal_Problem_1} and \eqref{Extremal_Problem_2} are trivial in the sense that ${\mc C}(A) ={\mc C}^+(A) = \infty$. This can be seen by taking $F_{\varepsilon}(x) = \frac{1}{\sqrt{\varepsilon}  }\,e^{-\pi x^2/\varepsilon}$ with $\varepsilon \to 0^+$. It is also clear that the mappings $A \mapsto {\mc C}(A)$ and $A \mapsto {\mc C}^+(A) $ are non-increasing for $1\leq A \leq \infty$.

\smallskip

The extremal problems presented here are certainly related to the phenomenon of Fourier uncertainty, and works like \cite{DL,DS}, that discuss $L^1$-uncertainty principles, provide interesting insights. The recent works \cite{BKK, GOS} on the ``root-uncertainty principle" for the Fourier transform also consider interesting extremal problems related to the theory of zeta-functions in number fields. Toward the problems of determining the exact values of the sharp constants ${\mc C}(A)$ and ${\mc C}^+(A)$ 
we establish the following results.  

\begin{theorem} \label{Thm1} Let $1 \leq A \leq \infty$. With respect to problems \eqref{Extremal_Problem_1} and \eqref{Extremal_Problem_1_infty}, the following propositions hold:
\begin{itemize}
\item[(a)] If $A = \infty$, then:

\begin{itemize}
\item[(a.1)] There exists an even and real-valued function $G \in \E$, with $G(0)=1$, that extremizes \eqref{Extremal_Problem_1_infty}. 
\item[(a.2)] All the extremizers of \eqref{Extremal_Problem_1_infty} are of the form $F(x) = c\,G(x)$, where $c\in \C$ with $c\ne0$. 
\item[(a.3)] The extremal function $G$ verifies the identity
\begin{equation*}
{\mc C}(\infty) \int_{-\infty}^{\infty} \sgn(G(x)) \,F(x)\,\dx = F(0)
\end{equation*}
for any $F \in \E$.
\item[(a.4)] $($cf. \cite{Gor2}$)$ The sharp constant ${\mc C}(\infty)$ verifies the inequality
\begin{equation}\label{April14_10:41am}
1.08185\ldots \leq  {\mc C}(\infty) \leq  1.09769\ldots.
\end{equation}
\end{itemize}

\item[(b)] If $A=1$ then ${\mc C}(1)=2$, but there are no extremizers for \eqref{Extremal_Problem_1}.

\smallskip

\item[(c)] If $1 < A < \infty$, then:
\begin{itemize}
\item[(c.1)] There exists an even and real-valued function $G \in {\mc A}$ that extremizes \eqref{Extremal_Problem_1}. 
\item[(c.2)] Let $c_0 = \frac{4}{\pi} \left(\int_{-1}^1 \frac{\sin \pi t}{\pi t} \dt \right)^{-1} = 1.07995\ldots $ and $d_0 = 1.09769\ldots$ be the constant on the right-hand side of \eqref{April14_10:41am}. Let $\lambda = \lambda(A)$ be the unique solution of 
$$1 - \frac{1}{A} = \sin\left(\frac{\pi \lambda}{2}\right) - \frac{\pi \lambda}{2}\cos\left(\frac{\pi \lambda}{2}\right)$$
with $0 < \lambda < 1$. Then
\begin{equation}\label{May31_11:57pm}
\max\left\{2A - 2\sqrt{A(A-1)}\ , \  \frac{\pi A\,c_0}{2} \cos\left(\frac{\pi \lambda(A)}{2}\right)\right\}\, \leq\,  {\mc C}(A)\, \leq\, \min\left\{ \left( \frac{d_0}{1 - \frac{0.3}{(A-2)}}\right) ,\,2  \right\},
\end{equation}
where the first upper bound on the right-hand side of \eqref{May31_11:57pm} is only available in the range $2.6 \leq A < \infty$.
\end{itemize}

\end{itemize}
\end{theorem}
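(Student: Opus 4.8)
The plan is to treat the three cases of Theorem~\ref{Thm1} separately, exploiting the scale invariance and the structure of the linear functional being optimized.

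\textbf{Existence and uniqueness (parts (a.1), (a.2), (c.1)).} The natural approach is a normal-families / weak-compactness argument. Normalize a maximizing sequence by $\|F_n\|_1 = 1$, so that $|F_n(0)| \to {\mc C}(A)$ (respectively ${\mc C}(\infty)$) and the $L^1$-norms are bounded. In the case $A=\infty$, the functions lie in the Paley--Wiener space $\E$; since $\|F_n\|_1 = 1$ forces $\|\widehat{F_n}\|_\infty \le 1$ and $\supp(\widehat{F_n}) \subset [-1,1]$, the $F_n$ form a normal family (they are uniformly bounded and equicontinuous, being bandlimited with controlled $L^1$ mass), so a subsequence converges locally uniformly to an entire function $G$ of exponential type $2\pi$. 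One then checks $G \in L^1$ with $\|G\|_1 \le 1$ and $|G(0)| = {\mc C}(\infty)$, so $\|G\|_1 = 1$ exactly (otherwise rescaling beats the supremum); replacing $G$ by $e^{-i\theta}G$ and then by $\tfrac12(G(x)+\overline{G(-x)})$ shows we may take $G$ even and real-valued. For $1 < A < \infty$ the functions are no longer bandlimited, but the penalty term $A\int_{[-1,1]^c}|\widehat{F_n}|$ together with $|F_n(0)| = |\int \widehat{F_n}| \le \int_{[-1,1]}|\widehat{F_n}| + \int_{[-1,1]^c}|\widehat{F_n}|$ gives, for $A>1$, an a priori bound on $\int_{[-1,1]^c}|\widehat{F_n}|$ along the maximizing sequence; this provides just enough compactness (tightness of $\widehat{F_n}$ away from $[-1,1]$, plus the $L^1$ bound on $F_n$) to extract a limit. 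For uniqueness in the $A=\infty$ case, the functional $F \mapsto |F(0)|/\|F\|_1$ is ``strictly convex up to the obvious symmetries'': if $F_1, F_2$ are both extremal with $\|F_i\|_1=1$ and $F_i(0) = {\mc C}(\infty)>0$ real, then $\tfrac12(F_1+F_2)$ is also extremal, forcing $\|F_1+F_2\|_1 = \|F_1\|_1 + \|F_2\|_1$, i.e.\ $F_1$ and $F_2$ have the same argument a.e., and combined with the interpolation rigidity of Paley--Wiener functions (as in Vaaler's formulas cited in the introduction) one deduces $F_1 = F_2$.

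\textbf{The dual / sign identity (part (a.3)).} This is a first-order optimality (Euler--Lagrange) condition. Since $G$ maximizes $F(0)/\|F\|_1$ over the linear space $\E$ with $G(0)=1$, $\|G\|_1 = 1/{\mc C}(\infty)$, for any $F \in \E$ and real $\epsilon$ the function $G + \epsilon F$ satisfies $(G+\epsilon F)(0)/\|G+\epsilon F\|_1 \le {\mc C}(\infty)$. Differentiating at $\epsilon = 0$, using $\frac{d}{d\epsilon}\|G+\epsilon F\|_1\big|_{\epsilon=0} = \int \re\big(\sgn(G(x))\overline{F(x)}\big)\dx$ wherever $G \ne 0$ (the zero set of the non-trivial entire function $G$ is discrete, hence null), we get $F(0) = {\mc C}(\infty)\int \sgn(G(x))F(x)\dx$ first for real $F$, then for all $F \in \E$ by linearity. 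One must check the differentiation of the $L^1$-norm is legitimate, which follows from dominated convergence since $|G+\epsilon F| \le |G| + |F|$ and the difference quotients are bounded.

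\textbf{The explicit constants (parts (b), (c.2), (a.4)).} For (b), the lower bound ${\mc C}(1) \ge 2$ comes from testing with a concrete family (e.g.\ a scaled Fej\'er kernel or a truncated triangle whose transform is controlled outside $[-1,1]$), and the matching upper bound ${\mc C}(1) \le 2$ from $|F(0)| \le \int|\widehat F| = \int_{[-1,1]}|\widehat F| + \int_{[-1,1]^c}|\widehat F| \le \|\widehat F\|_\infty \cdot 2 + \int_{[-1,1]^c}|\widehat F| \le 2\|F\|_1 + \int_{[-1,1]^c}|\widehat F|$, giving $|F(0)| - \int_{[-1,1]^c}|\widehat F| \le 2\|F\|_1$; non-existence of an extremizer follows because equality would force $\widehat F = \|F\|_\infty$ constant on $[-1,1]$ and $\widehat F = 0$ outside, i.e.\ $F$ a multiple of $\sin(2\pi x)/(2\pi x) \notin L^1$, a contradiction. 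For (c.2), the lower bounds are again produced by explicit test functions: the $2A - 2\sqrt{A(A-1)}$ bound presumably from a one-parameter family of Gaussians or Fej\'er-type kernels optimized in the parameter (the $\sqrt{A(A-1)}$ shape strongly suggests completing a square in an explicitly computable integral), while the bound involving $c_0$ and $\lambda(A)$ comes from a carefully chosen bandlimited-plus-correction test function — $c_0$ being exactly $4/\pi$ divided by $\int_{-1}^1 \sin(\pi t)/(\pi t)\,\dt$, the value associated with the Fej\'er/conjugate-Fej\'er construction. The upper bound $\min\{d_0/(1 - 0.3/(A-2)), 2\}$ is obtained by \emph{relating ${\mc C}(A)$ to ${\mc C}(\infty)$}: given $F \in {\mc A}$, one splits off the part of $\widehat F$ outside $[-1,1]$, replaces $F$ by a bandlimited surrogate, and estimates the error incurred, with the $0.3/(A-2)$ term quantifying how much mass can escape; the ``$2$'' is just the trivial bound from (b) combined with monotonicity.

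\textbf{Main obstacle.} The hardest part will be parts (c.1) and the upper bound in (c.2): establishing compactness of maximizing sequences in the non-bandlimited regime $1 < A < \infty$ (where one has no uniform equicontinuity for free and must genuinely use the penalty term to prevent mass from escaping to infinity in frequency), and then producing the quantitative passage from ${\mc C}(A)$ to ${\mc C}(\infty)$ with an explicit, honest constant like $0.3/(A-2)$ — this requires a delicate truncation argument balancing the $L^1(\R)$ norm of $F$ against the $L^1([-1,1]^c)$ norm of $\widehat F$, and getting a clean bound valid for all $A > 2$ (hence the restriction of that branch of the estimate) is where most of the technical work lies.
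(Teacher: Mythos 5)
Your proposal gets the broad shape right for existence and the first-variation identity, but there are two genuine gaps and one place where your route diverges substantially from the paper's.

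\textbf{Uniqueness, part (a.2).} You correctly observe that averaging two extremizers and invoking equality in the triangle inequality forces $G$ and $G_1$ to have the same argument almost everywhere. But from there you write ``combined with the interpolation rigidity of Paley--Wiener functions (as in Vaaler's formulas) one deduces $F_1 = F_2$'' --- this does not follow. Two bandlimited functions with the same sign a.e.\ need not be equal, and Vaaler's interpolation formulas do not close this gap. The paper's argument is different and not optional: it forms the nonnegative function $R = G \cdot G_1$, which has $\supp(\widehat{R}) \subset [-2,2]$, applies Krein's theorem to write $R = |S|^2$ with $\supp(\widehat{S}) \subset [-1,1]$, and then uses AM--GM together with the fact that $|S(0)| = 1$ and $\|S\|_1 \le \tfrac12(\|G\|_1 + \|G_1\|_1) = 1/\mathcal{C}(\infty)$ to force equality in AM--GM pointwise, i.e.\ $G = G_1$. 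The Krein factorization is the key device, and its absence in your sketch is a real gap.

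\textbf{Existence for $1 < A < \infty$, part (c.1).} Your a priori bounds $|F_n(0)| \lesssim_A \|F_n\|_1$ and $\int_{[-1,1]^c}|\widehat{F_n}| \lesssim_A \|F_n\|_1$ are exactly the ones the paper derives (its (2.4), (2.5)). But ``this provides just enough compactness... to extract a limit'' skips the core difficulty you yourself flag at the end: why is the weak limit $G$ nonzero, and why does it achieve $\mathcal{C}(A)$? Weak-$L^2$ convergence alone does not let you pass to the limit inside $\int_{[-1,1]^c}|\widehat{F_n}|$ (absolute values destroy weak convergence). The paper's fix is Mazur's lemma: pass to convex combinations $H_k$ converging strongly in $L^2$ and (along a further subsequence) pointwise a.e.\ on both the $x$ side and the $t$ side, then use Fatou on each piece of the splitting
\[
G(0) - A\!\int_{[-1,1]^c}\!|\widehat G| = \int_{-1}^1 \widehat G - \int_{[-1,1]^c}\!\big(|\widehat G| - \widehat G\big) - (A-1)\!\int_{[-1,1]^c}\!|\widehat G|
\]
to show $G(0) - A\int_{[-1,1]^c}|\widehat G| \ge c_1 \mathcal{C}(A) > 0$, which both proves $G \ne 0$ and proves extremality in one stroke. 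Without an idea of this type your weak limit could be $0$, and ``tightness of $\widehat{F_n}$ away from $[-1,1]$'' is not a substitute.

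\textbf{Upper bound in (c.2): different route.} You propose a primal truncation argument (replace $F$ by a bandlimited surrogate and estimate the error). The paper instead argues by duality: exhibit $\varphi \in L^\infty(\R)$ with $\widehat\varphi \equiv 1$ on $(-1,1)$ and $|\widehat\varphi - 1| \le A$ everywhere, so that $\mathcal{C}(A) \le \|\varphi\|_\infty$. The explicit $\varphi$ is obtained by dilating Gorbachev's $A=\infty$ test function $\psi$ slightly and mollifying (convolving $\widehat\psi$ with a box), which spreads the Dirac masses in $\widehat\psi$ into boxes of height $\le A - 2$ supported away from $[-1,1]$; the cost is the dilation factor $\gamma = 1/(1 - \tfrac{c}{2(A-2)})$ with $c = 0.6 > |a_1|$, giving the $0.3/(A-2)$ term, valid only once $A \ge 2 + c$, hence the $2.6 \le A$ restriction. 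Your primal truncation strategy would have to independently locate the bandlimited extremizer's value $d_0$ and control the truncation error in $L^1$, which is considerably harder to make quantitative; the duality route is both cleaner and where the specific constant comes from, so I would encourage you to rework the upper bound through the dual.

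The rest (parts (a.1), (a.3), (b), and the lower bounds via dilated Fej\'er and dilated $H(x) = \cos(2\pi x)/(1-16x^2)$) is essentially in line with the paper.
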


\noindent {\it Remark:} The function 
\begin{equation}\label{April27_11:27am}
H(x) = \frac{\cos 2\pi x}{1 - 16x^2}
\end{equation}
belongs to the class $\mc{E}$ and verifies $\|H\|_1 = 1/c_0$. We then have $H(0)/\|H\|_1 = c_0 = 1.07995\ldots$, and this yields a slightly inferior lower bound for ${\mc C}(\infty)$ when compared to the one in \eqref{April14_10:41am} (which is obtained in \cite{Gor2} by means of more complicated numerical examples). Due to its simplicity, this particular function $H(x)$ plays an important role in our work, being used in the proof of the lower bound in \eqref{May31_11:57pm} and in the proof of Theorem \ref{Thm5}.

\begin{theorem} \label{Thm2}
Let $1 \leq A \leq \infty$. With respect to problems \eqref{Extremal_Problem_2} and \eqref{Extremal_Problem_2_infty}, the following propositions hold:
\begin{itemize}
\item[(a)] If $A = \infty$, then:

\begin{itemize}
\item[(a.1)] There exists a function $G \in {\mc E}^+$ that extremizes \eqref{Extremal_Problem_2_infty}. 
\item[(a.2)] The sharp constant ${\mc C}^+(\infty)$ verifies the inequality
\begin{equation*}
{\mc C}(\infty) \, \leq  \,{\mc C}^+(\infty) \, < \, 1.2.
\end{equation*}
\end{itemize}

\item[(b)] If $A=1$ then ${\mc C}^+(1)=2$, but there are no extremizers for \eqref{Extremal_Problem_2}.

\item[(c)] If $1 < A < \infty$, then:
\begin{itemize}
\item[(c.1)] There exists an even and real-valued function $G \in {\mc A}^+$ that extremizes \eqref{Extremal_Problem_2}. 
\item[(c.2)] The sharp constant ${\mc C}^+(A)$ verifies the inequality
\begin{equation}\label{June01_3:17pm}
{\mc C}(A) \leq \, \,{\mc C}^+(A)\, \, \leq \, \min\left\{ \left(\frac{1.2}{1 - \frac{0.222}{(A-1)}}\right),\,2\right\},
\end{equation}
where the first upper bound on the right-hand side of \eqref{June01_3:17pm} is only available in the range $1.222 < A < \infty$. 
\item[(c.3)] In particular, if $A=36/11$ a numerical example yields the lower bound
\begin{equation}\label{May31_12:59pm}
\frac{25}{21} < {\mc C}^+\!\left(\frac{36}{11}\right). 
\end{equation}

\end{itemize}

\end{itemize}

\end{theorem}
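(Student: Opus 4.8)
I would deduce the six assertions of Theorem~\ref{Thm2} from four essentially separate arguments: elementary comparisons (covering the case $A=1$ and ${\mc C}(A)\le{\mc C}^+(A)$), Fourier duality for the upper bounds, a compactness argument for existence, and one explicit example for \eqref{May31_12:59pm}.

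\emph{Elementary comparisons.} For ${\mc C}(A)\le{\mc C}^+(A)$ in (a.2) and (c.2), given $F\in\A$ I would rotate $F$ so $F(0)=|F(0)|$ and pass to the even real function $G(x)=\h\big(\re F(x)+\re F(-x)\big)\in\A^+$; then $G(0)=|F(0)|$, $\|G\|_1\le\|F\|_1$, $\widehat G(t)=\h\big(\re\widehat F(t)+\re\widehat F(-t)\big)$, whence $(\widehat G(t))_+\le\h(|\widehat F(t)|+|\widehat F(-t)|)$, and comparison of objectives gives the claim; when $A=\infty$, continuity forces $\widehat F(\pm1)=0$, so $G\in\E^+$. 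I would isolate as a lemma the fact that if $F\in\A^+$ and $\int_{[-1,1]^c}(\widehat F)_+<\infty$, then $\widehat F\in L^1(\R)$ and $F(0)=\int_\R\widehat F$: writing $F(0)=\lim_{s\to0^+}\int\widehat F(t)e^{-\pi st^2}\dt$, all pieces converge except $\int_{[-1,1]^c}(\widehat F)_-e^{-\pi st^2}\dt$, which increases to $\int_{[-1,1]^c}(\widehat F)_-$, necessarily finite because $F(0)$ is. For $A=1$, the lemma and $\|\widehat F\|_\infty\le\|F\|_1$ give $F(0)-\int_{[-1,1]^c}(\widehat F)_+=\int_{-1}^1\widehat F-\int_{[-1,1]^c}(\widehat F)_-\le 2\|F\|_1$, so ${\mc C}^+(1)\le2$, and with monotonicity of $A\mapsto{\mc C}^+(A)$ this yields the bound $2$ in \eqref{June01_3:17pm} (and the one in \eqref{May31_11:57pm}); the Gaussians $F_\varepsilon(x)=\varepsilon^{-1/2}e^{-\pi x^2/\varepsilon}$ have objective $\int_{-1}^1e^{-\pi\varepsilon t^2}\dt\to2$, so ${\mc C}^+(1)=2$; and equality above forces $\widehat F\ge0$ on $\R$ and $\widehat F\equiv\|F\|_1$ on $[-1,1]$, hence $F\ge0$ a.e.\ and $\int F(x)(1-\cos2\pi xt)\dx=0$ for small $t$, so $F\equiv0$ --- no extremizer exists.

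\emph{Upper bounds.} I would exhibit a dual function: an even $g\in L^1(\R)$ with $\widehat g\equiv1$ on $[-1,1]$ and $1-A\le\widehat g\le1$ on $\R$ (dropping the lower constraint when $A=\infty$). For $F\in\A^+$ with finite penalty, $\widehat F\in L^1$ by the lemma, so $\int gF=\int\widehat g\,\widehat F$ by the multiplication formula; splitting at $[-1,1]$ and using the pointwise bounds on $\widehat g$ off $[-1,1]$ gives $\int gF\ge F(0)-A\int_{[-1,1]^c}(\widehat F)_+$, hence ${\mc C}^+(A)\le\|g\|_\infty$. So for $A=\infty$ it suffices to build one explicit $g$ --- e.g.\ assembled from dilated Fej\'er kernels, or a cosine spline whose transform is pushed slightly below $0$ so as to reduce $g(0)$ --- with $\widehat g\equiv1$ on $[-1,1]$, $\widehat g\le1$ on $\R$, and $\|g\|_\infty<1.2$. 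For $1.222<A<\infty$ I would start from that $g$ and replace $\widehat g$ by $\max(\widehat g,1-A)$, i.e.\ add the nonnegative correction $w=(1-A-\widehat g)_+$, supported where $\widehat g$ dips below $1-A$; the new function has sup-norm $\le\|g\|_\infty+\int_\R w$, and a direct estimate of $\int_\R w$ with the explicit $g$ yields the bound $\tfrac{1.2}{1-0.222/(A-1)}$ together with its range of validity.

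\emph{Existence (a.1), (c.1).} Take a maximizing sequence $F_n$ in $\E^+$ (resp.\ $\A^+$) with $\|F_n\|_1=1$ and objective $\to{\mc C}^+(A)$, which is positive (e.g.\ $H$ gives ${\mc C}^+(A)\ge c_0>1$). From $\|\widehat{F_n}\|_\infty\le1$, $\int_{-1}^1\widehat{F_n}\le2$, $F_n(0)=\int_\R\widehat{F_n}$, and positivity of the objective one gets $\int_{[-1,1]^c}(\widehat{F_n})_+\le\tfrac{2}{A-1}$ (resp.\ $=0$) and then a uniform bound on $\|\widehat{F_n}\|_1$; so $\{\widehat{F_n}\}$ is bounded in $L^1\cap L^\infty$ and $\{F_n\}$ in $L^1\cap L^2\cap L^\infty$. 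Writing $\widehat{F_n}=u_n+v_n$ with $u_n$ the restriction of $\widehat{F_n}$ to $[-1,1]$, the $\check{u}_n$ are band-limited to $[-1,1]$ and $L^2$-bounded, hence a normal family, so along a subsequence $u_n\rightharpoonup u$ in $L^2([-1,1])$ and $\check{u}_n\to\check{u}$ locally uniformly, while $v_n$, bounded in $L^1$, weak-$*$ converges to a measure $\rho$; put $F_*=\check{u}+\check{\rho}$. The main obstacle --- and I expect it to be the crux of the whole theorem --- is to upgrade this to a bona fide extremizer: one must show $\rho$ is an $L^1$ function (no mass of $v_n$ escapes to infinity or concentrates), that $\|F_*\|_1\le1$, and that the objective passes to the limit, giving $F_*\in\E^+$ (resp.\ $\A^+$). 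The heuristic is that escape cannot be maximizing --- a positive piece of $\widehat{F_n}$ at a large frequency costs $A$ times its mass in the penalty, a negative piece only lowers $F_n(0)=\int\widehat{F_n}$, and a piece of $F_n$ drifting to $\pm\infty$ consumes $L^1$-mass while contributing $o(1)$ to $F_n(0)$ --- and I would make this rigorous via a concentration--compactness dichotomy.

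\emph{The numerical bound (c.3).} Here I would simply evaluate the objective on one well-chosen even $F\in\A^+$. The generic estimate ${\mc C}^+(A)\ge{\mc C}(A)$ is too weak (it gives only about $1.16<\tfrac{25}{21}$ at $A=\tfrac{36}{11}$), and any $F$ with $\widehat F$ supported in $[-1,1]$ is capped by ${\mc C}(\infty)<1.098$; so the test function must let $\widehat F$ spill just past $[-1,1]$, trading a small, penalty-bounded excursion for a net gain in $F(0)/\|F\|_1$. A natural candidate is a one- or two-parameter perturbation of $H(x)=\frac{\cos2\pi x}{1-16x^2}$ (whose $L^1$-norm $1/c_0$ is known), with parameters tuned so that $\big(F(0)-\tfrac{36}{11}\int_{[-1,1]^c}(\widehat F)_+\big)/\|F\|_1>\tfrac{25}{21}$; this is the ``numerical example'' referred to in the statement.
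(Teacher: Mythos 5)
Your overall framework---duality for the upper bounds, compactness for existence, an elementary inequality and the Gaussian family for $A=1$, a numerical example for (c.3)---agrees with the paper's, and the lemma you isolate (that $F\in\A^+$ with $\int_{[-1,1]^c}(\widehat F)_+\,\dt<\infty$ already forces $\widehat F\in L^1(\R)$) is a tidy clean-up of a point the paper handles informally. Your treatment of (b) is correct. Two of the remaining parts, however, are not complete as written.

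For existence (a.1), (c.1), your route is genuinely different (splitting $\widehat{F_n}$ at $[-1,1]$, a normal-family argument for the low-frequency part, weak-$*$ convergence of $v_n$ to a measure $\rho$), and you correctly identify the obstruction: one must show $\rho$ is absolutely continuous, that $\|F_*\|_1\le 1$, and that the objective passes to the limit. You propose a concentration-compactness dichotomy but do not carry it out, so this is a gap. The paper sidesteps the dichotomy entirely: apply Mazur's lemma to the weakly $L^2$-convergent extremizing sequence $\{F_n\}$ to obtain a strongly $L^2$-convergent sequence $H_k$ of finite convex combinations (still extremizing, since $(f+g)_+\le f_++g_+$), pass to a subsequence with $H_k\to G$ and $\widehat{H_k}\to\widehat G$ pointwise a.e., obtain $\|G\|_1\le 1$ and $\|\widehat G\|_1<\infty$ from Fatou, and then apply Fatou once more to each nonnegative piece in
\[
G(0)-A\int_{[-1,1]^c}\big(\widehat G\big)_+\dt = \int_{-1}^1\widehat G\,\dt \; -\; \int_{[-1,1]^c}\Big(\!-\big(\widehat G\big)_-\Big)\dt \; -\; (A-1)\int_{[-1,1]^c}\big(\widehat G\big)_+\dt,
\]
the first term converging by $L^2$ convergence on $[-1,1]$. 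This yields $G(0)-A\int(\widehat G)_+\ge\limsup_k\big(H_k(0)-A\int(\widehat{H_k})_+\big)$ together with $\|G\|_1\le\liminf_k\|H_k\|_1$, which certifies $G$ as an extremizer with no escape-of-mass analysis.

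For the upper bounds (a.2), (c.2), your duality scheme is sound, but no dual function is exhibited, so the constants $1.2$ and $0.222$ are not actually obtained. There is also a structural mismatch: the paper restricts $F$ to Schwartz (via \S4.1) and uses a dual function $\Psi\in L^\infty(\R)$ whose distributional Fourier transform $\widehat\Psi$ contains Dirac masses with negative coefficients at $t=\pm1,\pm3$; your requirement $g\in L^1(\R)$ excludes any such $\Psi$. For $1<A<\infty$, your truncation $\widehat g\mapsto\max(\widehat g,1-A)$ gives the additive estimate $\|g\|_\infty+\int_\R w$, which does not in any obvious way produce the multiplicative form $1.2\,/\big(1-0.222/(A-1)\big)$. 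The paper instead dilates $\widehat\Psi(\cdot/\gamma)$ and convolves with $\lambda^{-1}\chi_{[-\lambda/2,\lambda/2]}$, turning the negative Dirac masses into bumps of height at most $A-1$ supported away from $[-1,1]$, with $\gamma=\big(1-\tfrac{0.444}{2(A-1)}\big)^{-1}$; the bound $\|\Phi\|_\infty\le\gamma\|\Psi\|_\infty<1.2\,\gamma$ is exactly \eqref{June01_3:17pm}. So the shape of the bound is a dilation factor, not a truncation correction, and you would need to redo the bookkeeping to recover it. Finally, for (c.3) your heuristic is in the right spirit, though the paper's example \eqref{June16_10:01am} is a specific five-term combination of Gaussian-type functions rather than a perturbation of $H$.
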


\noindent{\it Remark:} Note that for small values of $A$, the right-hand side of \eqref{June01_3:17pm} gives a better bound than the right-hand side of \eqref{May31_11:57pm}, and can be used instead. The reason, as we shall see, is that such bounds come from modifying the test functions in the dual problem for the case $A=\infty$. In our construction, these modifications do not necessarily maintain the hierarchy as $A$ approaches $1$.  
 
\subsection{Bounds for prime gaps on RH} Let $p_n$ denote the $n$th prime. Assuming the Riemann hypothesis (RH), a classical result of Cram\'{e}r \cite{C1} yields the bound 
\begin{equation}\label{April17_1:23pm}
\limsup_{n \to \infty} \frac{p_{n+1} - p_{n}}{\sqrt{p_n} \,\log p_n}\leq c\,,
\end{equation}
where $c$ is a universal constant. Building upon the works of Goldston \cite{G} and of Ramar\'{e} and Saouter \cite{RS}, the current best form of this bound is due to Dudek \cite[Theorem 1.3]{D}, who obtained \eqref{April17_1:23pm} with constant $c=1$. Here we improve this and other bounds in this theory by establishing an interesting connection with the extremal problems presented in the previous section.

\smallskip

Our strategy consists of three main ingredients: (i) the explicit formula, (ii) the Brun-Titchmarsh inequality, and (iii) the derived extremal problems in Fourier analysis. Letting $\pi(x)$ denote the number of primes less than or equal to $x$, we define the Brun-Titchmarsh constant ${\bf B}$ in our desired scale by
\begin{equation}\label{May31_2:05pm}
{\bf B} := \limsup_{x \to \infty} \frac{\pi(x + \sqrt{x}) - \pi(x)}{\sqrt{x}/\log x}
\end{equation}
and we observe that
\begin{equation}\label{May31_12:58pm}
1 \leq {\bf B} \leq \frac{36}{11}.
\end{equation}
The lower bound in \eqref{May31_12:58pm} follows from the prime number theorem $\pi(x) \sim x/\log x$ as $x\to\infty$ and the upper bound on {\bf B} follows from the work of Iwaniec \cite[Theorem 14]{I}.

\smallskip

We prove the following general result.
\begin{theorem}\label{Thm3}
Assume the Riemann hypothesis. Let ${\mc C}^+(\cdot)$ be defined in \eqref{Extremal_Problem_2} and ${\bf B}$ be defined in \eqref{May31_2:05pm}. Then, for any $\alpha \geq 0$, we have
\begin{equation}\label{Main_ineq}
\inf\left\{c>0; \ \liminf_{x \to \infty} \frac{\pi\big(x + c \sqrt{x}\log x\big ) - \pi(x)}{\sqrt{x}} > \alpha\right\} \leq \frac{(1 + 2\alpha)} {{\mc C}^+({\bf B})} < \frac{21}{25}(1+2\alpha).
\end{equation}
The last inequality comes from \eqref{May31_12:59pm} and \eqref{May31_12:58pm}.
\end{theorem}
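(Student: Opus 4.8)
The plan is to establish the bound \eqref{Main_ineq} by combining the three announced ingredients: the explicit formula for $\psi(x)$ under RH, the Brun--Titchmarsh inequality in the form encoded by the constant $\mathbf{B}$, and the dual formulation of the extremal problem \eqref{Extremal_Problem_2}. First I would set up the contradiction framework: fix $\alpha \ge 0$ and $c>0$ such that along some sequence $x \to \infty$ one has $\pi(x + c\sqrt{x}\log x) - \pi(x) \le \alpha \sqrt{x}$, i.e.\ there are ``few'' primes in the interval $I = (x, x + c\sqrt{x}\log x]$; the goal is to show this forces $c \ge (1+2\alpha)/\mathcal{C}^+(\mathbf{B})$, which gives the infimum bound. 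The engine is to take any admissible test function $F \in \mathcal{A}^+$ for \eqref{Extremal_Problem_2} (even, continuous, $L^1$), rescale it as $F_\delta(u) = F(u/\delta)$ for a parameter $\delta$ tied to the length $c\sqrt{x}\log x$ of $I$, and feed $\sum_{\gamma} F_\delta$-type sums over zeros $\rho = \tfrac12 + i\gamma$ of $\zeta$ into the Guinand--Weil explicit formula. Under RH the $\gamma$ are real, so the zero-sum $\sum_\gamma \widehat{F_\delta}(\gamma/2\pi)$ is real, and the explicit formula relates it to $F_\delta(0)$ (the ``main term'', which after normalization contributes $\widehat{F}$-mass near the pole) minus a sum over prime powers $\sum_{n} \Lambda(n) n^{-1/2} \widehat{F_\delta}(\text{something involving }\log n)$, plus lower-order archimedean terms.

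The key maneuver is that the prime-power sum localizes: after the rescaling and a shift centering the test function at the interval $I$, the term $\widehat{F_\delta}$ evaluated at the logarithms of integers picks out primes in a window around $x$ of width comparable to $c\sqrt{x}\log x$, and the contribution splits into (i) primes inside $I$ — there are $\le \alpha\sqrt{x}(1+o(1))$ of these, each weighted by roughly $x^{-1/2}\log x$, giving a bounded contribution proportional to $\alpha$ — and (ii) the ``tail'' primes in a dilated window $(x - K c\sqrt{x}\log x, x + Kc\sqrt{x}\log x]$ for the $t$-range where $\widehat{F}(t)$ may be positive, i.e.\ $|t| > 1$ after normalization. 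This is precisely where $\mathbf{B}$ enters: by the Brun--Titchmarsh bound \eqref{May31_2:05pm}, the number of primes in any interval of length $\sqrt{x}$ near $x$ is at most $(\mathbf{B}+o(1))\sqrt{x}/\log x$, so summing over the tail window of total length $O(c\sqrt{x}\log x)$ bounds the tail prime count by $\mathbf{B}\, c\, \sqrt{x} (1+o(1))$ times the relevant $\widehat{F}$-mass, which is $A\int_{[-1,1]^c}(\widehat{F}(t))_+\,\dt$ once we identify $A$ with $\mathbf{B}$ and track the factor $\delta \asymp c\log x$ against the $x^{-1/2}$ weight. Optimizing over $F$ then turns the resulting inequality into
\[
c\,\mathcal{C}^+(\mathbf{B}) \ge 1 + 2\alpha,
\]
up to $o(1)$ errors that vanish as $x\to\infty$, which rearranges to \eqref{Main_ineq}; the final strict inequality $\frac{21}{25}(1+2\alpha)$ is then immediate from monotonicity of $\mathcal{C}^+$, the upper bound $\mathbf{B}\le 36/11$ in \eqref{May31_12:58pm}, and the numerical lower bound \eqref{May31_12:59pm} for $\mathcal{C}^+(36/11)$.

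The main obstacle I anticipate is the careful bookkeeping of the two prime windows and of the constant $1+2\alpha$ — in particular making sure the factor of $2$ in front of $\alpha$ emerges correctly (it should come from the fact that a positivity/majorant-type argument counts the interval $I$ on both ``sides'' of the centering point, or equivalently from the symmetrization built into working with even $F$ and the pair $\pm\gamma$), and that the Brun--Titchmarsh tail is estimated on exactly the set $\{|t|>1\}$ matching the normalization $[-1,1]^c$ in \eqref{Extremal_Problem_2}. One must also handle the prime-power terms with $n = p^k$, $k\ge 2$ (negligible, $O(\sqrt{x}\log x \cdot x^{-1/2}) = O(\log x)$ relative to the main scale, hence $o(\sqrt{x})$ after the right normalization), and justify that the archimedean term and the contribution of the pole of $\zeta(s)/\zeta(s)$ at $s=1$ produce exactly the normalization constant that, together with $F(0)$, assembles the quantity $\|F\|_1^{-1}(F(0) - A\int_{[-1,1]^c}(\widehat F(t))_+\dt)$ whose supremum is $\mathcal{C}^+(\mathbf{B})$. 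A secondary technical point is that the liminf/infimum formulation requires the argument to run for \emph{every} $c$ below the critical value along \emph{some} sequence of $x$; this is handled by contraposition as set up above, with the $o(1)$ terms absorbed uniformly along that sequence.
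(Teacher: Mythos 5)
Your proposal follows the same three-pronged strategy as the paper (Guinand--Weil explicit formula applied to a rescaled bandlimited test function $f(z)=\Delta F(\Delta z)$, Brun--Titchmarsh for the ``edge'' primes where $\widehat{F}$ can be positive, optimization over $F$ yielding $\mathcal{C}^+(\mathbf{B})$), and you correctly identify that the $\alpha$ arises from the primes inside the interval $I$ while $\mathbf{B}$ enters via the tail. However, there is a genuine gap in your accounting of where the ``$1$'' and the factor of $2$ in $1+2\alpha$ come from, and your proposed mechanism for the $2$ is incorrect.

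In the paper's argument, the sum over zeros is not merely observed to be real; it is bounded via the zero-counting function, giving $\sum_\gamma |f(\gamma)| = \frac{\log(1/2\pi\Delta)}{2\pi}\|F\|_1 + O(1)$. With $\Delta\asymp \frac{c\log x}{4\pi\sqrt{x}}$, one has $\log(1/\Delta)\sim\frac12\log x$, so the zero sum contributes $\sim \frac{\log x}{4\pi}\|F\|_1$; meanwhile the primes in $I$ contribute at most $\sim\frac{(\alpha+o(1))\log x}{2\pi}\|F\|_1$, and the main term $\Delta\sqrt{a}\,F(0)$ is $\sim\frac{c\log x}{4\pi}F(0)$. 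Dividing through by $\frac{\log x}{4\pi}\|F\|_1$ yields $c\,J^+(F)\le 1+2\alpha + o(1)$: the ``$1$'' is the zero-sum term, and the $2$ is precisely the ratio $\frac{1/(2\pi)}{1/(4\pi)}$ between the explicit-formula prime weight and the zero-density/main-term scaling. It does \emph{not} come from symmetrization of $I$ or from the pairing of $\pm\gamma$; that heuristic would mislead you when you tried to complete the argument. A secondary slip: the prime-power contribution with $k\ge 2$ is $o(1)$, not $O(\log x)$ --- your estimate overcounts by treating every integer in the window as a prime power; $O(\log x)$ is exactly the critical scale and would \emph{not} be negligible. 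You would need the correct sparse bound (roughly $O(\log^2x/\sqrt{x})$) to close the argument.
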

The case $\alpha =0$ in Theorem \ref{Thm3}  yields an affirmative answer for a question posed in \cite{D}, on whether one could  establish \eqref{April17_1:23pm} with a constant $c<1$.

\begin{corollary}\label{CramerCor}
Assume the Riemann hypothesis. Let ${\mc C}^+(\cdot)$ be defined in \eqref{Extremal_Problem_2} and ${\bf B}$ be defined in \eqref{May31_2:05pm}. Then
\begin{equation}\label{June22_6:44am}
\limsup_{n \to \infty} \frac{p_{n+1} - p_{n}}{\sqrt{p_n} \,\log p_n}\leq  \frac{1} {{\mc C}^+({\bf B})} < \frac{21}{25}.
\end{equation}
\end{corollary}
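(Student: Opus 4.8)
The plan is to deduce Corollary~\ref{CramerCor} from Theorem~\ref{Thm3} by taking $\alpha = 0$ and translating the resulting statement about $\pi$ into one about consecutive prime gaps. With $\alpha = 0$, the left-hand side of \eqref{Main_ineq} reads
\[
\inf\left\{c>0; \ \liminf_{x \to \infty} \frac{\pi\big(x + c \sqrt{x}\log x\big ) - \pi(x)}{\sqrt{x}} > 0\right\} \leq \frac{1}{{\mc C}^+({\bf B})} < \frac{21}{25},
\]
where the last inequality uses \eqref{May31_12:59pm} together with the monotonicity of $A \mapsto {\mc C}^+(A)$ and the bound ${\bf B} \le 36/11$ from \eqref{May31_12:58pm}, exactly as stated in Theorem~\ref{Thm3}. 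So it remains only to show that this infimum being $\le c$ forces $\limsup_{n\to\infty} (p_{n+1}-p_n)/(\sqrt{p_n}\log p_n) \le c$.

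First I would fix any $c' > 1/{\mc C}^+({\bf B})$; by definition of the infimum there is some $c$ with $c' > c$ and $\liminf_{x\to\infty}\big(\pi(x + c\sqrt{x}\log x) - \pi(x)\big)/\sqrt{x} > 0$. In particular there exist $\delta > 0$ and $x_0$ such that $\pi(x + c\sqrt{x}\log x) - \pi(x) \ge \delta \sqrt{x} > 0$ for all $x \ge x_0$; that is, the interval $(x, x + c\sqrt{x}\log x]$ contains a prime for every $x \ge x_0$. Now take $n$ large enough that $p_n \ge x_0$ and apply this with $x = p_n$: the interval $(p_n, p_n + c\sqrt{p_n}\log p_n]$ contains a prime, hence $p_{n+1} \le p_n + c\sqrt{p_n}\log p_n$, i.e. $(p_{n+1}-p_n)/(\sqrt{p_n}\log p_n) \le c < c'$. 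Letting $n \to \infty$ gives $\limsup_{n\to\infty}(p_{n+1}-p_n)/(\sqrt{p_n}\log p_n) \le c'$, and since $c' > 1/{\mc C}^+({\bf B})$ was arbitrary, the limsup is $\le 1/{\mc C}^+({\bf B})$. The strict inequality $1/{\mc C}^+({\bf B}) < 21/25$ is inherited directly from \eqref{Main_ineq}.

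There is no real obstacle here: the only point requiring a modicum of care is the logical unwinding of the nested $\inf$/$\liminf$ quantifiers — specifically, extracting from ``$\liminf > 0$'' a uniform lower bound $\delta\sqrt{x}$ valid for all large $x$, which is what guarantees a prime in \emph{every} such short interval rather than merely infinitely many of them, and then recognizing that ``a prime in $(x, x+c\sqrt{x}\log x]$ for all large $x$'' is precisely the prime-gap statement applied at $x = p_n$. Everything else is a direct substitution of $\alpha = 0$ into Theorem~\ref{Thm3}.
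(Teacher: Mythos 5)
Your proof is correct and follows exactly the route the paper intends: substitute $\alpha = 0$ into Theorem~\ref{Thm3} and unwind the $\inf$/$\liminf$ quantifiers to reach the prime-gap form of the bound. The paper leaves this unwinding implicit (stating the corollary without a separate proof), so your careful extraction of the uniform lower bound $\delta\sqrt{x}$ from $\liminf > 0$ and the specialization to $x = p_n$ is a sound and appropriately detailed version of the argument the authors had in mind.
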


\smallskip

\noindent We note from \eqref{May31_12:58pm} and Theorem \ref{Thm2} (b) that the limit of this method would yield a constant $\frac{1}{2}$ on the right-hand side of \eqref{June22_6:44am}. On the other hand, under stronger assumptions, namely the Riemann hypothesis and Montgomery's pair correlation conjecture, it is known that the limit supremum in \eqref{June22_6:44am} is actually zero (see, for instance, \cite{HB, HBG, Mu}). 

\smallskip

 The case $\alpha =1$ in Theorem \ref{Thm3} yields the constant 
$$c = \frac{3} {{\mc C}^+({\bf B})} < \frac{63}{25}$$ 
on the right-hand side of \eqref{Main_ineq}. This also sharpens the previous best result,  due to Dudek \cite{D}, who had obtained this inequality with constant $c=3$.

\smallskip
 
By working with a particular dilation of the bandlimited function \eqref{April27_11:27am} and an explicit version of the Brun-Titchmarsh inequality due to Montgomery and Vaughan \cite{MV}, we are able to make all of our error terms effective and, assuming the Riemann hypothesis, prove that
\[
p_{n+1}-p_n \le  \frac{22}{25}\sqrt{p_n}\log p_n
\]
for all primes $p_n>3$. 

\begin{theorem}\label{Thm5}
Assume the Riemann hypothesis. Then, for $x\ge 4$, there is always a prime number in the interval $[x,\, x + \frac{22}{25}\sqrt{x}\log x]$. 
\end{theorem}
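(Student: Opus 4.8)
The statement is an effective version of Corollary~\ref{CramerCor} (the case $\alpha=0$ of Theorem~\ref{Thm3}). The plan is to run the argument behind Theorem~\ref{Thm3} with one concrete near-extremal test function, to replace Iwaniec's asymptotic bound ${\bf B}\le 36/11$ \cite{I} by a completely explicit form of the Brun--Titchmarsh inequality, and to keep track of every constant so that the conclusion is valid for all $x\ge 4$. That argument produces, for each even $F\in\mc A^+$, the conclusion that for all sufficiently large $x$ the interval $[x,x+c\sqrt x\log x]$ contains a prime as soon as
\begin{equation*}
c\;>\;\frac{\|F\|_1}{\,F(0)-{\bf B}\displaystyle\int_{[-1,1]^c}\big(\widehat F(t)\big)_+\,\dt\,}\,,
\end{equation*}
combining three ingredients: the Riemann--Weil explicit formula (used under RH), the Brun--Titchmarsh inequality, and the extremal problem \eqref{Extremal_Problem_2} (whose optimal constant ${\mc C}^+({\bf B})$ is exactly the reciprocal of the infimum over $F$ of the right-hand side).

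For the effective version I would take $F(x)=H(x/\lambda)$, the dilation of $H(x)=\cos(2\pi x)/(1-16x^2)$ from \eqref{April27_11:27am} by $\lambda=\lambda({\bf B})\in(0,1)$ from Theorem~\ref{Thm1}(c.2). Since $\widehat H(t)=\tfrac\pi4\cos(\tfrac{\pi t}{2})$ for $|t|\le 1$ and $\widehat H(t)=0$ otherwise (an elementary computation), one has $F\in\mc A^+$, $F(0)=1$, $\widehat F(t)=\lambda\widehat H(\lambda t)\ge 0$ supported on $[-\lambda^{-1},\lambda^{-1}]$, $\|F\|_1=\lambda/c_0$, and $\int_{[-1,1]^c}(\widehat F(t))_+\,\dt=1-\sin(\tfrac{\pi\lambda}{2})$; optimizing $\lambda$ in the displayed fraction gives precisely the equation defining $\lambda({\bf B})$ in Theorem~\ref{Thm1}(c.2), with optimal value $\tfrac{\pi}{2}{\bf B}\,c_0\cos(\tfrac{\pi\lambda({\bf B})}{2})$. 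This is the ``particular dilation'' referred to in the statement.

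The analytic heart is to make every estimate in the proof of Theorem~\ref{Thm3} quantitative for this $F$. Concretely, one forms the localized weighted prime sum $\sum_n\tfrac{\Lambda(n)}{\sqrt n}F\big(\tfrac{\log(n/x_0)}{\delta}\big)$ with $x_0\approx x$ and $\delta\asymp\log x/\sqrt x$ chosen so that the central lobe of $F$ corresponds to $[x,x+h]$, $h=\tfrac{22}{25}\sqrt x\log x$, truncates the sum (needed since $H$ decays only quadratically) and expands it by the explicit formula under RH, controlling the truncation error through the RH bound $\psi(t)=t+O(\sqrt t\log^2 t)$. The main term is proportional to $F(0)$, the contribution of the nontrivial zeros is controlled using the part of $\widehat F$ supported on $[-1,1]$, and the residual — corresponding to the outer lobes of $F$, which rescale to short intervals of length $\asymp\sqrt x\log x$ near $x+kh$, $k\ge1$ — is estimated by the explicit Montgomery--Vaughan bound $\pi(y+z)-\pi(y)\le 2z/\log z$ \cite{MV}; this is the term ${\bf B}\int_{[-1,1]^c}(\widehat F)_+\,\dt$, with ${\bf B}$ replaced by the effective Brun--Titchmarsh constant of the regime $z\asymp\sqrt x\log x$. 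Collecting the estimates with all constants explicit, $[x,x+h]$ must contain a prime for every $x$ above an explicit threshold $X_0$, because $\tfrac{22}{25}$ exceeds $1/{\mc C}^+({\bf B})<\tfrac{21}{25}$ by a margin deliberately chosen to absorb the loss from the explicit Brun--Titchmarsh constant, the explicit versions of all $o(1)$ terms, and the passage from ``$x$ large'' to ``$x\ge X_0$''. For $4\le x\le X_0$ one notes that the claim is equivalent to $p_{n+1}-p_n\le\tfrac{22}{25}\sqrt{p_n}\log p_n$ and verifies it against the tabulated record prime gaps, which lie far below this threshold in the relevant range.

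The hardest step should be the Brun--Titchmarsh/tail estimate: because $H$ is not compactly supported, one must simultaneously bound the slowly decaying tail of the localized sum using only the RH bound for $\psi$, and bound the contribution of the infinitely many outer lobes by the \emph{right} effective multiple of $\int_{[-1,1]^c}(\widehat F)_+\,\dt$ with fully explicit constants, without exhausting the narrow cushion between $21/25$ and $22/25$. Secondary technical points are the joint optimization of $\delta$ and $\lambda$ against $h$, the control of the zero contribution uniformly in $x$, and the determination of an explicit $X_0$ small enough for the concluding finite verification to be feasible.
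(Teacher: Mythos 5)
Your high-level architecture matches the paper's: apply an explicit formula under RH, control the edges of the interval with an explicit Brun--Titchmarsh inequality, plug in a dilation of $H(x)=\cos(2\pi x)/(1-16x^2)$, track all error terms numerically, and cover small $x$ by the tabulated prime-gap computations (the paper cites Oliveira e Silva--Herzog--Pardi and reduces to $x\ge 4\cdot 10^{18}$). However, there are two substantive problems with the proposal as written.

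First, you have the Fourier pair in the wrong roles. You propose to weight primes by $F\big(\tfrac{\log(n/x_0)}{\delta}\big)$ with $F=H(\cdot/\lambda)$ (not compactly supported, decaying only quadratically), worry about ``truncating'' the resulting infinite prime sum, and invoke the RH bound on $\psi$ for the truncation error. In the actual argument the prime sum is weighted by $\widehat F$, which \emph{is} compactly supported (in $[-\lambda^{-1},\lambda^{-1}]$), so the sum over prime powers is finite and no truncation is needed; the explicit Montgomery--Vaughan bound $\pi(y+z)-\pi(y)<2z/\log z$ then gives precisely ${\bf B}\int_{[-1,1]^c}(\widehat F)_+$ on the two edge pieces of $\widehat F$'s support. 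The infinite sum that requires careful explicit control is the \emph{zero} sum $\sum_\gamma |f(\gamma)|$, which the paper handles not with the RH bound for $\psi$ but with the explicit zero-counting estimate $|N(x)-\tfrac{x}{2\pi}\log\tfrac{x}{2\pi e}-\tfrac78|\le 0.15\log x+3$ (built on Trudgian's bound) plus summation by parts. Your dual weighting would make the Guinand--Weil formula inapplicable as stated (the function $h$ must be analytic in a strip, while your $h$ would be compactly supported), and even via a Perron/Mellin route the quadratic tail of $F$ would not yield the clean $\int_{[-1,1]^c}(\widehat F)_+$ term.

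Second, the constant accounting does not work the way you describe. The explicit Brun--Titchmarsh constant used is ${\bf B}\le 4$ (Montgomery--Vaughan, valid for $y\ge\sqrt x$), not Iwaniec's asymptotic $36/11$, and accordingly the dilation is taken near $\lambda(4)\approx 0.892$ (the paper simply uses $\lambda=0.9$), not $\lambda(36/11)$. With this choice one computes afresh that $J(F)=\tfrac{F(0)-4\int_{[-1,1]^c}(\widehat F)_+}{\|F\|_1}>\tfrac{25}{22}$, and $22/25$ is its reciprocal with a tiny cushion to absorb the explicit $O(1)$ errors. The bound $1/{\mc C}^+(36/11)<21/25$ comes from a completely different, non-bandlimited test function (a combination of Gaussians), and the gap between $21/25$ and $22/25$ is nowhere near large enough to absorb the penalty of replacing $36/11$ by $4$ while keeping that test function; only the recomputation with $A=4$ and the dilated $H$ makes $22/25$ work. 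Finally, you should be aware that the paper actually sets up the computation through a tailored Mellin-transform identity (its Lemma 9, with $\vartheta\delta=\pi/2$) specifically because the cosine structure of $\widehat H$ makes the contour shift exact, and only afterwards retraces the Fourier-side estimates; your Guinand--Weil route is workable but would require additional bookkeeping.
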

This theorem improves a result of Dudek, Greni\'{e}, and Molteni \cite[Theorem 1.1]{DGM}, who had previously reached a similar conclusion with $c=\frac{22}{25}$ replaced by $c=c(x) = 1 + \frac{4}{\log x}$. Cram\'{e}r \cite{C2} has conjectured that 
$$p_{n+1} - p_n = O(\log^2 p_n),$$
and this problem remains open to this date. It has been verified by Oliveira e Silva, Herzog, and Pardi \cite[Section 2.2]{OHP} that 
\begin{align}\label{April27_12:11pm}
p_{n+1} - p_n < \log^2 p_n
\end{align}
for all primes $11 \leq p_n \leq 4\cdot 10^{18}$. Estimate \eqref{April27_12:11pm} plainly implies the conclusion of Theorem \ref{Thm5} for all $4\leq x \leq 4\cdot 10^{18}$. Therefore, in our proof, we assume that $x \geq 4\cdot 10^{18}$. 

\smallskip

We now proceed to the proofs of the main results stated in this introduction. This is carried out in Sections \ref{Jun15_Existence} -- \ref{Sec5}. In Section \ref{Sec7} we have a general discussion on some related extremal problems in Fourier analysis, which includes for example the existence of extremizers for the Fourier optimization problem of Cohn and Elkies \cite{CE} related to sphere packing. Some of this material may be of independent interest.

\section{Existence of extremizers}\label{Jun15_Existence}

In this section we discuss the existence of extremizers for the extremal problems \eqref{Extremal_Problem_1} -- \eqref{Extremal_Problem_2_infty}. We prove here parts (a.1), (b), and (c.1) of Theorems \ref{Thm1} and \ref{Thm2}. We begin by making some simplifying observations, that will be helpful for the rest of the paper. Note that we may restrict ourselves to the situation when $\widehat{F} \in L^1(\R)$ (otherwise the quotients on right-hand sides of \eqref{Extremal_Problem_1}, \eqref{Extremal_Problem_2}, and \eqref{Extremal_Problem_2_infty} yield $-\infty$), and we assume this throughout the rest of the paper. In particular, $F$ decays at infinity and $\|F\|_{\infty}$ is attained at some point. 

\smallskip

The class ${\A}$ in  Theorem \ref{Thm1} includes complex-valued functions, but for our extremal problems we can 
restrict attention to even, real-valued functions.  Indeed, given a non-identically zero $F \in \mc{A}$, 
 the following steps either increase the quotients on the right-hand sides of  \eqref{Extremal_Problem_1} -- \eqref{Extremal_Problem_1_infty} or leave them unaltered: 
\begin{itemize}
\item by translating $F$ over $\R$, we may assume that $|F(0)| = \|F\|_{\infty}$;
\item by dilating $F$, we may assume that $\|F\|_1 =1$;
\item by multiplying $F$ by a unimodular complex number, we may assume that $F(0) >0$;
\item by replacing $F(x)$ by $\big(F(x) + \overline{F(x)}\big)/2$ we may assume that $F$ is real-valued;
\item by replacing $F(x)$ by $\big(F(x) + F(-x)\big)/2$ we may assume that $F$ is even.
\end{itemize}

From the definitions it is clear that $\mc{C}(A)$ and $\mc{C}^+(A)$ are non-increasing functions of $A$.  The observations 
above show that in \eqref{Extremal_Problem_1} -- \eqref{Extremal_Problem_1_infty} we can restrict attention to even, real-valued functions, so that 
${\mc{C}}(A) \le \mc{C}^+(A)$.    The Fej\'{e}r kernel $F(x) =  \big(\sin(\pi x)/(\pi x) \big)^2$ reveals that $\mc{C}(\infty) \ge 1$.  
For every $F \in \mc{A}$ we have
\begin{align}\label{June15_10:12am}
|F(0)| - \int_{[-1,1]^c} \big|\widehat{F}(t)\big|\,\dt \leq  \left|\int_{-1}^{1} \widehat{F}(t)\right| \,\dt \leq 2\|F\|_1, 
\end{align} 
so that  $\mc{C}(1) \leq2$.   A similar argument gives $\mc{C}^+(1) \le 2$.   
Putting together all of these observations,  for $1\le A\le \infty$, we obtain the chain of inequalities
\begin{equation*} 
1\le \mc{C}(\infty) \le \mc{C}(A) \le \mc{C}^+(A) \le \mc{C}^+(1) \le 2. 
\end{equation*} 

\subsection{Proof of Theorem \ref{Thm1} (a.1)} This is the case $A= \infty$ and we are restricted to the class $\E \subset \mathcal{A}$ of continuous functions $F:\R \to \C$, with $F \in L^1(\R)$ and $\supp\big( \widehat{F}\big) \subset [-1,1]$. Let $\{F_n\}_{n\geq1}$ be an extremizing sequence verifying the conditions above, i.e. a sequence $\{F_n\}_{n\geq1} \subset \E$ of even and real-valued functions, with $\|F_n\|_1 = 1$,  $\|F_n\|_{\infty} = F_n(0) >0$, and 
\begin{equation*}
\lim_{n \to \infty} F_n(0) = \mc{C}(\infty).
\end{equation*} 
Since ${\mc{C}}(\infty) \le 2$, it follows that $\{F_n\}_{n\geq1}$ is a bounded sequence in $L^2(\R)$.  Hence, there exists $G \in L^2(\R)$ such that (after passing to a subsequence, if necessary) $F_n \rightharpoonup G$ weakly in $L^2(\R)$. In this case, $\supp\big( \widehat{G}\big) \subset [-1,1]$ and by Fourier inversion $G$ can be taken continuous. For any $y \in \R$, we have
\begin{align*}
F_n(y)  = \int_{-1}^{1} e^{2\pi i y t}\, \widehat{F}_n(t)\,\dt & = \int_{-\infty}^{\infty}  \frac{\sin 2 \pi (x - y)}{\pi (x - y)} \,F_n(x)\,\dx  \to \int_{-\infty}^{\infty}  \frac{\sin 2 \pi (x - y)}{\pi (x - y)} \,G(x)\,\dx\\
& = \int_{-1}^{1} e^{2\pi i y t}\, \widehat{G}(t)\,\dt = G(y),
\end{align*}
as $n \to \infty$. It follows that $G$ is even, real-valued and $G(0) = \mc{C}(\infty)$. Moreover, by Fatou's lemma, we have $\|G\|_1 \leq 1$. Hence $G \in \mc{E}$, and from the definition of $\mc{C}(\infty)$ we must have $\|G\|_1 = 1$ which makes $G$ an extremizer. Multiplying this $G$ by the constant factor $\mc{C}(\infty)^{-1}$ we arrive at the extremizer stated in the theorem (that assumes the value $1$ at $x=0$).

\subsection{Proof of Theorem \ref{Thm1} (b)} \label{June08_10:45am}

We already observed in \eqref{June15_10:12am} that  $\mc{C}(1) \leq2$. 
By taking $F_{\varepsilon}(x) = \frac{1}{\sqrt{\varepsilon}  }\,e^{-\pi x^2/\varepsilon}$ with $\varepsilon \to 0^+$ we see that $\mc{C}(1) =2$. In order to obtain equality in \eqref{June15_10:12am} we must have $\widehat{F}(t) = c\|F\|_1$ for all $t \in [-1,1]$, for some constant $c \in \C$ with $|c|=1$. This is not possible, and hence there are no extremizers in this case.

\subsection{Proof of Theorem \ref{Thm1} (c.1)} \label{June08_11:01am}
Here $1 < A < \infty$. Suppose $F \in \mc{A}$ is non-identically zero, with 
\begin{equation} \label{2.3} 
\frac 12 \le \frac{|F(0)|  - A \int_{[-1,1]^c} \big|\widehat{F}(t)\big|\,\dt}{\|F\|_1}. 
\end{equation} 
Since 
\begin{align*}
\int_{[-1,1]^c} \big|\widehat{F}(t)\big|\,\dt \geq \left| F(0) - \int_{-1}^{1} \widehat{F}(t)\,\dt\right| \geq |F(0)| - 2\|F\|_1\,,
\end{align*}
we may use \eqref{2.3} to see that 
\begin{equation} \label{2.4}
|F(0)| \le \frac{2A-\tfrac{1}{2}}{A-1} \| F\|_1. 
\end{equation} 
Inserting this estimate into \eqref{2.3}, we also have 
\begin{equation} \label{2.5} 
\int_{[-1,1]^{c}} |{\widehat F}(t)| \dt \le \frac{3}{2(A-1)}\| F\|_1. 
\end{equation}  


\smallskip

Let $\{F_n\}_{n\geq1} \subset \mc{A}$ be an extremizing sequence of even and real-valued functions, with $\|F_n\|_1 = 1$,  $\|F_n\|_{\infty} = F_n(0) >0$, and $\widehat{F}_n \in L^1(\R)$. Thus 
\begin{equation*}
\lim_{n \to \infty} \left(F_n(0)  - A \int_{[-1,1]^c} \big|\widehat{F}_n(t)\big|\,\dt \right)= \mc{C}(A).
\end{equation*} 
Since $\mc{C}(A) \ge 1$, from our observation in \eqref{2.4}  we see that 
$\{ F_n(0)\}_{n\ge 1}$ is a bounded sequence, and from \eqref{2.5} that $\big\{\big\|\widehat{F}_n\big\|_1\big\}_{n\geq1}$ is also bounded. 




\subsubsection{Step 1} Since $\|F_n\|_{\infty} = F_n(0)$, the sequence $\{F_n\}_{n \geq1}$ is bounded in $L^2(\R)$. Passing to a subsequence, if necessary, we may assume that $F_n(0) \to c$, for some constant $c \geq {\mc C}(A)$, and that $F_n \rightharpoonup G$ weakly in $L^2(\R)$ for some $G \in L^2(\R)$. By Mazur's lemma \cite[Corollary 3.8 and Exercise 3.4]{B}, there exists a sequence $H_k \to G$ strongly in $L^2(\R)$, with $H_k \in {\rm Conv}\big(\{F_n\}_{n \geq k}\}\big)$ (i.e. each $H_k$ is a {\it finite} linear convex combination of functions $F_n$ with $n \geq k$). Note that $H_k$ is even and real-valued, $\|H_k\|_{\infty} = H_k(0) \to c$, $\|H_k\|_1 \leq 1$, and $\big\{\big\|\widehat{H}_k\big\|_1\big\}_{k\geq1}$ remains bounded. By passing to a further subsequence, we may also assume that $H_k \to G$ and $\widehat{H}_k \to \widehat{G}$, pointwise almost everywhere. Hence $G$ is also even and real-valued. Note that $\{H_k\}_{k\geq1}$ is also an extremizing sequence.

\subsubsection{Step 2} By Fatou's lemma $\|G\|_1 \leq \liminf_{k \to \infty}\|H_k\|_1 \leq 1$ and $\|\widehat{G}\|_1 \leq \liminf_{k \to \infty}\|\widehat{H}_k\|_1 <\infty$. By Fourier inversion, we may assume that $G$ is continuous (after eventually modifying it on a set of measure zero), hence $G \in \mc{A}$. First we claim that $G$ is nonzero. In fact, since $\{H_k\}_{k\geq1}$ is an extremizing sequence and $H_k(0) \to c \geq {\mc C}(A)$, from \eqref{2.4} we find that $\liminf_{k \to \infty} \|H_k\|_1 \geq c_1 >0$. From the $L^2$-convergence (applied below just in the interval $[-1,1]$) and Fatou's lemma, we have
\begin{align*}
G(0)  & - A \int_{[-1,1]^c}  \big|\widehat{G}(t)\big|\,\dt =\int_{-1}^{1}\widehat{G}(t) \,\dt - \int_{[-1,1]^c} \left(\big|\widehat{G}(t)\big| - \widehat{G}(t)\right) \dt - (A-1) \int_{[-1,1]^c} \big|\widehat{G}(t)\big|\,\dt\\
& \geq \limsup_{k\to \infty} \left(\int_{-1}^{1}\widehat{H_k}(t) \,\dt - \int_{[-1,1]^c} \left(\big|\widehat{H}_k(t)\big| - \widehat{H}_k(t)\right)\dt - (A-1) \int_{[-1,1]^c} \big|\widehat{H}_k(t)\big|\,\dt\right)\\
& = \limsup_{k\to \infty} \left( H_k(0)  - A \int_{[-1,1]^c}  \big|\widehat{H}_k(t)\big|\,\dt\right)\\
& \geq  c_1 \, \mc{C}(A).
\end{align*}
This shows that $G$ is nonzero. The same computation above (up to its third line) shows that $G$ is indeed an extremizer, since $\|G\|_1 \leq \liminf_{k \to \infty}\|H_k\|_1$.

\subsection{Proof of Theorem \ref{Thm2} (a.1), (b), and (c.1)} \label{June16_8:25am} The proof of part (b) follows along the same lines as the argument in \S\ref{June08_10:45am} (with the same extremizing family). The proofs of parts (a.1) and (c.1) follow the outline of \S \ref{June08_11:01am} and we simply indicate the minor modifications needed.

\smallskip

In seeking extremizers when $1 < A < \infty$, we may assume that $F(0) >0$ and that $\widehat{F} \in L^1(\R)$ (recall that here we are already working within the class of even and real-valued functions).  Suppose that $F \in \mc{A}^+$ is non-identically zero, with 
\begin{equation} \label{2.3_thm2} 
\frac 12 \le \frac{F(0)  - A \int_{[-1,1]^c} \big(\widehat{F}(t)\big)_+\,\dt}{\|F\|_1}. 
\end{equation} 
Since 
\begin{align*}
\int_{[-1,1]^c} \big(\widehat{F}(t)\big)_+\,\dt \geq F(0) - \int_{-1}^{1} \widehat{F}(t)\,\dt \geq F(0) - 2\|F\|_1\,,
\end{align*}
we may use \eqref{2.3_thm2} to see that 
\begin{equation} \label{2.4_thm2}
F(0) \le \frac{2A-\tfrac{1}{2}}{A-1} \| F\|_1. 
\end{equation} 
As before, inserting this estimate into \eqref{2.3_thm2}  we obtain
\begin{equation} \label{2.5_thm2} 
\int_{[-1,1]^{c}} \big({\widehat F}(t)\big)_+ \dt \le \frac{3}{2(A-1)}\| F\|_1. 
\end{equation}  

\smallskip

Let $\{F_n\}_{n\geq1} \subset \mc{A^+}$ be an extremizing sequence with $\|F_n\|_1 = 1$,  $F_n(0) >0$, and $\widehat{F}_n \in L^1(\R)$. Note that, in principle, we do not necessarily have $ \|F_n\|_{\infty} = F_n(0)$. Since $\mc{C}^+(A) \ge 1$, from \eqref{2.4_thm2}  we see that 
$\{ F_n(0)\}_{n\ge 1}$ is a bounded sequence, and from \eqref{2.5_thm2} we see that $\big\{\big\|\widehat{F}_n\big\|_1\big\}_{n\geq1}$ is also bounded. 

\smallskip

The rest of the proof follows as in Steps 1 and 2 of \S \ref{June08_11:01am}. Note that the corresponding sequence $\{H_k\}_{k\geq1}$ will be extremizing,  due to the general property that $(f+g)_+ \leq f_+ + g_+$, and inequality \eqref{2.4_thm2} shows that $\liminf_{k \to \infty} \|H_k\|_1 \geq c_1 >0$. For the final computation, one uses the identity
\begin{align*}
G(0)  - A \int_{[-1,1]^c}  \big(\widehat{G}(t)\big)_+\,\dt =\int_{-1}^{1}\widehat{G}(t) \,\dt - \int_{[-1,1]^c}  - \big(\widehat{G}(t)\big)_- \dt - (A-1) \int_{[-1,1]^c} \big(\widehat{G}(t)\big)_+\,\dt.
\end{align*}

\smallskip

For the case $A = \infty$ (part (a.1)), the required modifications are similar and we omit the details.

\section{Uniqueness of extremizers}\label{Sec3_Uniqueness}

In this section we continue the study of the extremal problem \eqref{Extremal_Problem_1_infty}. We prove the uniqueness of a bandlimited extremizer (up to multiplication by a complex scalar) and provide its variational characterization as described in parts (a.2) and (a.3) of Theorem \ref{Thm1}.

\subsection{Proof of Theorem \ref{Thm1} (a.2)} Let $G \in \E \subset {\mc A}$ be an even and real-valued extremizer of \eqref{Extremal_Problem_1_infty} with $G(0)=1$. Let $G_1 \in \E$ be another extremizer of \eqref{Extremal_Problem_1_infty}, with $G_1(0) = 1$. It suffices to show that $G_1 = G$.  

\smallskip

Let $F = (G + G_1)/2$. Then, by the triangle inequality, we have
\begin{equation}\label{April19_10:55am}
\int_{-\infty}^{\infty} |F(x)|\,\dx \leq \frac{1}{2} \int_{-\infty}^{\infty} \big(|G(x)| + |G_1(x)|\big)\,\dx= \frac{1}{{\mc C}(\infty)},
\end{equation}
and $F(0) = 1$. To avoid strict inequality in \eqref{April19_10:55am} we must have
$$|G(x) + G_1(x)| = |G(x)| + |G_1(x)|$$
for all $x \in \R$. In particular, this shows that $G_1:\R \to \C$ is real-valued and that 
$$G(x)\,G_1(x) \geq 0$$ 
for all $x \in \R$. Let $R= G\cdot G_1$. Then $R$ is a nonnegative and integrable function with $\supp\big(\widehat{R}\big) \subset [-2,2]$. By a classical result of Krein \cite[p. 154]{A}, we have $R(x) = |S(x)|^2$, for some $S \in L^2(\R)$ with $\supp\big(\widehat{S}\big) \subset [-1,1]$. Observe that $|S(0)| =1$ and that
\begin{equation}\label{April19_11:39am}
\int_{-\infty}^{\infty} |S(x)|\,\dx = \int_{-\infty}^{\infty} \sqrt{G(x)\,G_1(x)}\,\dx \leq \frac{1}{2} \int_{-\infty}^{\infty} \big(|G(x)| + |G_1(x)|\big)\,\dx  = \frac{1}{{\mc C}(\infty)}.
\end{equation}
In particular $S \in L^1(\R)$. To avoid strict inequality in \eqref{April19_11:39am} we must have $G(x) = G_1(x)$ for all $x \in \R$, completing the proof.
 
\subsection{Proof of Theorem \ref{Thm1} (a.3)} Let $G$ be the unique extremal function of \eqref{Extremal_Problem_1_infty} with $G(0)=1$. Let $F \in \E$ be a real-valued function with $F(0) = 0$ and define, for $\varepsilon \in \R$,
$$\Phi(\varepsilon) := \int_{-\infty}^{\infty} |G(x) + \varepsilon F(x)|\,\dx = \int_{-\infty}^{\infty} \big((G(x) + \varepsilon F(x))^2\big)^{1/2}\,\dx.$$
This is a differentiable function of the variable $\varepsilon$ and, since $G$ is an extremizer, we must have
\begin{align}\label{April19_12:11pm}
0 = \frac{\partial \Phi}{\partial \varepsilon}(0) = \int_{-\infty}^{\infty} \sgn(G(x))\,F(x)\,\dx.
\end{align}
If $F_1 \in \E$ is a generic real-valued function (not necessarily with $F_1(0) =0$), by \eqref{April19_12:11pm} we obtain that
\begin{align} \label{April19_12:17pm}
\begin{split}
{\mc C}(\infty) \int_{-\infty}^{\infty} \sgn(G(x))\,F_1(x)\,\dx & = {\mc C}(\infty) \int_{-\infty}^{\infty}\sgn(G(x)) \big(F_1(x) - F_1(0)G(x)\big)\dx  \\
& \ \ \ \ \ \ \ \ \ \ \ \ \ \ \ \ \ \  + {\mc C}(\infty) \int_{-\infty}^{\infty} \sgn(G(x))\,F_1(0)\,G(x)\,\dx\\
& = F_1(0).
\end{split}
\end{align}
Finally, if $F_2 \in \E$ is a generic complex-valued function, we may write $F_2(x) = A(x) - iB(x)$, where $A(x) = (F_2(x) + \overline{F_2(x)})/2$ and $B(x)  = i(F_2(x) - \overline{F_2(x)})/2$ are real-valued functions in $\E$, and use \eqref{April19_12:17pm} to arrive at
\begin{equation*}
{\mc C}(\infty) \int_{-\infty}^{\infty} \sgn(G(x))\,F_2(x)\,\dx =   F_2(0).
\end{equation*}

\section{Upper and lower bounds}

In this section we conclude the proofs of Theorems \ref{Thm1} and \ref{Thm2} by establishing the proposed upper and lower bounds for the sharp constants ${\mc C}(A)$ and ${\mc C}^+(A)$. 

\subsection{Approximations} \label{Sec_Approx} For the purpose of finding the values of the sharp constants ${\mc C}(A)$ and ${\mc C}^+(A)$ in problems \eqref{Extremal_Problem_1} -- \eqref{Extremal_Problem_2_infty}, without loss of generality we may work with smooth functions. For instance, let us show that we can simply consider $\widehat{F} \in C_c^{\infty}(\R)$. This observation is useful in some passages later in the paper.

\smallskip

Starting with $0 \neq F \in {\mc A}$ (or $0 \neq F \in {\mc A}^+$ in the case of \eqref{Extremal_Problem_2}), we write 
\begin{equation*}
J(F) :=  \frac{|F(0)| - A\int_{[-1,1]^c} \big|\widehat{F}(t)\big|\,\dt}{\|F\|_{1}}  \quad \text{and} \quad J^+(F) :=  \frac{F(0) - A\int_{[-1,1]^c} \big(\widehat{F}(t)\big)_+\,\dt}{\|F\|_{1}}.
\end{equation*}
In either situation we may also assume that $\widehat{F} \in L^1(\R)$ and that $J(F)$ and $J^+(F)$ are positive. Let $K(x) = \big(\sin(\pi x)/(\pi x) \big)^2$ be the Fej\'{e}r kernel and, for $\lambda >0$, define $K_\lambda(x) = \lambda^{-1}K(x/\lambda)$. By Young's inequality we have $\|F*K_{\lambda}\|_1 \leq \|F\|_1$, and using dominated convergence it follows that $\limsup_{\lambda \to 0} J(F*K_{\lambda}) \geq J(F)$ and $\limsup_{\lambda \to 0} J^+(F*K_{\lambda}) \geq J^+(F)$. Hence we may assume that our test function $F$ is bandlimited.

\smallskip

Let $\eta \in C^{\infty}_c(\R)$ be an even, nonnegative, and radially non-increasing function such that $\eta(0) =1$, $\supp(\eta) \subset [-1,1]$, and $\int_{-1}^1 \eta(x)\,\dx =1$. Again, let $\eta_\lambda(x) = \lambda^{-1}\eta(x/\lambda)$. If $\supp(\widehat{F}) \subset[-\Lambda, \Lambda]$, then $\widehat{F}*\eta_{\lambda} \in C^{\infty}_c(\R)$ and $\supp(\widehat{F}*\eta_{\lambda}) \subset [-\Lambda - \lambda, \Lambda + \lambda]$. By dominated convergence, we have $\lim_{\lambda \to 0} J(F\cdot \widehat{\eta_{\lambda}})= J(F)$ and $\lim_{\lambda \to 0} J^+(F\cdot \widehat{\eta_{\lambda}})= J^+(F)$. This verifies our claim in the cases $1 \leq A < \infty$. In the cases $A=\infty$ one has to slightly dilate $F$ beforehand in order to apply the procedure above and arrive at a function in the class $\E \subset {\mc A}$ for \eqref{Extremal_Problem_1_infty} and $\E^+  \subset {\mc A}^+$ for \eqref{Extremal_Problem_2_infty}.

\subsection{Proof of Theorem \ref{Thm1} (a.4)} \label{Sec4.2} The bounds
\begin{equation}\label{August10_3:37pm}
1.08185\ldots \leq  {\mc C}(\infty) \leq  1.09769\ldots
\end{equation}
were proved in the very interesting work of Gorbachev \cite[Theorem 3]{Gor2}, to which we refer the reader for details. These bounds improved upon the work of Andreev, Konyagin, and Popov \cite{AKP}, who had previously obtained
\begin{equation}\label{August10_3:38pm}
c_0 = 1.07995\ldots \leq {\mc C}(\infty) \leq 1.17898.
\end{equation}

As already pointed out in the introduction, the lower bound in \eqref{August10_3:38pm} comes from the simple example
$$ 
H(x) = \frac{\cos (2\pi x)}{1-16x^2}.
$$  
The Fourier transform of $H$ is $\widehat{H}(t) = \frac {\pi}{4} \cos(\pi t/2) \chi_{[-1,1]}(t)$, which 
may be verified by starting with our expression for $\widehat{H}(t)$ and computing its Fourier transform to recover $H$.   Thus $H$ belongs 
to the class $\E$, and $H(0)$ is clearly $1$.  To compute the $L^1$-norm of $H$ we observe that $\sgn(H(x)) = 2\chi_{[-\frac14, \frac14]}(x) - \sgn(\cos 2\pi x)$, and use Plancherel's theorem and the fact that $\sgn(\cos 2\pi x)$ has distributional Fourier transform supported outside $(-1,1)$ to get\footnote{The function $x \mapsto \sgn(\cos 2\pi x)$ is an example of a  {\it high pass function}, as studied in \cite{Lo}.} 
\begin{align*}
\| H\|_1 & = \int_{-\infty}^{\infty} |H(x)|\, \dx = \int_{-\infty}^{\infty} \big( 2\chi_{[-\frac14, \frac14]}(x) - \sgn(\cos 2\pi x)\big) \,H(x)\,\dx = \int_{-\infty}^{\infty}  2\chi_{[-\frac14, \frac14]}(x) \,H(x)\,\dx\\
& = \int_{-1}^{1} \left( \frac{2\sin(\pi t /2)}{\pi t} \right) \left( \frac {\pi}{4} \cos(\pi t/2)\right) \dt = \frac{\pi}{4} \int_{-1}^{1} \frac{\sin \pi t}{\pi t} \, \dt =1/c_0.
\end{align*}
This example will be useful later on to generate lower bounds for ${\mc C}(A)$ in the general case $1 < A < \infty$. The lower bound of Gorbachev \cite{Gor2} in \eqref{August10_3:37pm} comes from more complicated numerical examples.

\smallskip

The upper bound in \eqref{August10_3:37pm} comes from a dual formulation of the problem. Suppose that $\psi \in L^{\infty}(\R)$ is such that its distributional Fourier transform is identically equal to $1$ on the interval $(-1,1)$. Let ${\mc S}(\R)$ denote the Schwartz class. Then, for $F \in \E \cap {\mc S}(\R)$ (as discussed in \S \ref{Sec_Approx}), we have
\begin{align*}
\|\psi\|_{\infty} \int_{-\infty}^{\infty}|F(x)| \,\dx \geq \left|\int_{-\infty}^{\infty} F(x)\,\psi(x) \,\dx \right| = \left|\int_{-1}^{1} \widehat{F}(t)\,\widehat{\psi}(t) \,\dt \right| = |F(0)|,
\end{align*}
which implies that
\begin{equation*}
 {\mc C}(\infty) \leq \|\psi\|_{\infty}.
 \end{equation*}
With this dual formulation, it suffices to exhibit a nice test function $\psi$. 

\smallskip

We now briefly describe the construction of Gorbachev \cite[Lemma 9]{Gor2}. To simplify the notation (and align with the terminology of \cite{Gor2} to facilitate the references) we let 
$$j(x) = \frac{\sin(2 \pi x)}{2\pi x}$$ 
in what follows. For $\tau = 29289/100000 = 0.29289$ we define a continuous and piecewise linear function $\alpha:[0,1/2] \to \R$ by 
\begin{align}\label{August11_11:54am}
\begin{split}
\alpha(x) = \left\{
\begin{array}{lcl}
2x-1, && 0 \leq x \leq \tau;\\
2\tau - 1 + 2(1-\tau)(x - \tau)/\varepsilon, & &\tau \leq x \leq \tau + \varepsilon;\\
1, & &\tau + \varepsilon \leq x \leq 1/2 - 2 \varepsilon;\\
1 - y(x - 1/2 + 2\varepsilon)/\varepsilon, && 1/2 - 2\varepsilon \leq x \leq 1/2 - \varepsilon;\\
1 - y + y(x - 1/2 + \varepsilon)/\varepsilon, && 1/2 - \varepsilon \leq x \leq 1/2,
\end{array}
\right.
\end{split}
\end{align}
where 
\begin{align}\label{August11_11:55am}
\varepsilon = \frac{\tau^2 - 2 \tau + 1/2}{1 + y - 2\tau} >0,
\end{align}
and, having defined \eqref{August11_11:54am} and \eqref{August11_11:55am}, $y$ is finally chosen so that
$$\int_{0}^{1/2} \frac{ (1 - \alpha(x))}{j(x)} \cos(2 \pi x)\,\dx = 0.$$
One arrives at the values $y = 0.43056\ldots$ and $\varepsilon = 0.0000053884\ldots$. Let 
$$d_0 = \left(\int_0^{1/2} \frac{1 - \alpha(x)}{j(x)}\,\dx \right)^{-1} = 1.09769\ldots$$
and define {\it $1-$periodic even functions} $a(x)$ and $b(x)$ by 
$$a(x) = d_0 \,\alpha(x) \ \ {\rm and} \ \ b(x) = \frac{d_0 - a(x)}{2j(x)} - 1,\ \ {\rm for} \ \ x\in [0,1/2].$$
As observed in \cite{Gor2}, with this construction the functions $a$ and $b$ have Fourier series expansions
$$a(x) = \sum_{n=1}^{\infty}2 a_n \cos(2\pi nx),  \ \ b(x) = \sum_{n=2}^{\infty}2 b_n \cos(2\pi nx),  \ \ \sum_{n=1}^{\infty} |a_n| <\infty,  \ \ \text{and} \ \  \sum_{n=2}^{\infty} |b_n| < \infty.$$
(notice that the first Fourier coefficients verify $a_0 = b_0 = b_1 = 0$). A numerical evaluation leads to
$$a_1 = -0.5622\ldots, \ \ a_2 =0.0684\ldots, \ \ a_3 = 0.1005\ldots, $$
and since $\|a\|_{L^2[-\frac12,\frac12]}^2 = 0.7238\ldots$ and $2a_1^2  = 0.6321\ldots$, an application of Plancherel's theorem gives us that $|a_n| \leq |a_1|$ for all $n$. For the function $b$ we adopt a slightly different approach to bounding the Fourier coefficients $b_n$ (since $\|b\|_{L^2[-\frac12,\frac12]}$ is very large). A numerical integration yields 
$$|b_n| \le \int_{-1/2}^{1/2} |b(x)|\,\dx = 0.8283\ldots $$
for all $n\ge 2$.

\smallskip

Finally, let $\phi(x) = 2j(x)(1 + b(x))$, and define
\begin{equation}\label{June08_4:46pm}
\psi(x) = \phi(x) + a(x).
\end{equation}
This is the test function constructed by Gorbachev \cite{Gor2}, which verifies $\|\psi\|_{\infty} = d_0$ and has distributional Fourier transform identically equal to $1$ on the interval $(-1,1)$. In fact, we have
\begin{align}\label{June08_5:17pm}
\begin{split}
\widehat{\psi}(t) & = \widehat{\phi}(t) + \sum_{n=1}^{\infty} a_n\big(\delta(t-n) + \delta(t+n)\big)\\
& = \chi_{[-1,1]}(t) + \sum_{n=2}^{\infty} b_n \big( \chi_{[-1,1]}(t-n) + \chi_{[-1,1]}(t+n) \big)+ \sum_{n=1}^{\infty} a_n\big(\delta(t-n) + \delta(t+n)\big),
\end{split}
\end{align}
where $\delta$ is the Dirac delta distribution. We shall use this construction to generate upper bounds for ${\mc C}(A)$ in the general case $1 < A < \infty$. The observation that $\|\widehat{\phi}\|_{\infty} =1$ will be relevant later on.

\smallskip

\noindent {\it Remark}: In an earlier version of this manuscript, without being aware of the references \cite{AKP} and \cite{Gor2}, we had initially arrived at the test function
\begin{align*}
\widetilde{\psi}(x) = 2\widetilde{a}_0\chi_{[-\frac14, \frac14]}(x)  +  \sum_{n=1}^{\infty} 2\widetilde{a}_n\left( \chi_{[-\frac{1}{4}, \frac{1}{4}]}(x - \tfrac{n}{2}) + \chi_{[-\frac{1}{4},  \frac{1}{4}]}(x + \tfrac{n}{2})\right) - \widetilde{a_0}\sgn(\cos(2\pi x)),
\end{align*}
where $\widetilde{a}_n =  \frac{4}{\pi} \sum_{j = n}^{\infty} \frac{(-1)^j}{(2j+1)^2}$ are the Fourier coefficients in the expansion
\begin{align*}
\frac{(\pi t /2)}{\sin(\pi t /2)} = \widetilde{a}_0 + 2\sum_{n =1}^{\infty} \widetilde{a}_n \cos(n\pi t)
\end{align*}
for $-1\leq t < 1$. This leads to the bound $\mc{C}(\infty) \leq \big\|\widetilde{\psi}\big\|_{\infty} = \widetilde{a}_0 = 1.16624\ldots$, which is intermediate between \eqref{August10_3:37pm} and \eqref{August10_3:38pm}.

\subsection{Proof of Theorem \ref{Thm1} (c.2)} 

\subsubsection{Lower bounds} \label{Jun14_2:56pm}  As before, let $H(x) = (\cos 2\pi x)/(1 - 16x^2)$. Take $F(x) = H(x/\lambda)$ for a suitable parameter $\lambda \in (0,1]$ to be optimized. Then $F(0) =1$ and $\|F\|_1 = \lambda \|H\|_1 = \lambda /c_0$ with $c_0 = 1.079950...$. The ratio to be maximized is  
\begin{equation*}
\frac{c_0}{\lambda} \left( 1 -  A\,\lambda\,\frac{\pi}{4} \int_{1 \leq |t| \leq \frac{1}{\lambda}} \cos\left(\frac{\pi \lambda t }{2}\right)\,\dt\right) = \frac{c_0}{\lambda} \left( 1 - A\left(1 - \sin\left(\frac{\pi \lambda}{2}\right) \right)\right).
\end{equation*}
Calculus shows that this is maximized by choosing $\lambda$ such that
\begin{equation}\label{June08_4:01pm}
1 - \frac{1}{A} = \sin\left(\frac{\pi \lambda}{2}\right) - \frac{\pi \lambda}{2}\cos\left(\frac{\pi \lambda}{2}\right).
\end{equation}
For $\lambda = \lambda(A)$ verifying \eqref{June08_4:01pm}, this examples demonstrates that
$${\mc C}(A) \geq \frac{\pi Ac_0}{2} \cos\left(\frac{\pi \lambda(A)}{2}\right).$$

Note that as $A \to 1^+$, this lower bound goes to $\pi c_0/2$ and is not very effective. Alternatively, we can then use a dilation of the Fej\'{e}r kernel $K(x) = (\sin(\pi x)/ (\pi x))^2$ (note that $\widehat{K}(t) = (1-|t|)_+$). Again we consider $F(x) = K(x/\lambda)$ and optimize the dilation parameter $\lambda \in (0,1]$. The ratio we seek to maximize is
\begin{equation*}
\frac{1}{\lambda} \left(1 - A\,\lambda\,\int_{1 \leq |t| \leq \frac{1}{\lambda}} (1 - |\lambda t|)_+\, \dt\right) = \frac{1}{\lambda} - A\left(\frac{1}{\lambda} + \lambda - 2\right).
\end{equation*}
The optimal choice is $\lambda = \sqrt{(A-1)/A}$, which leads to the bound
$${\mc C}(A) \geq 2A - 2\sqrt{A(A-1)}.$$

\subsubsection{Upper bounds} \label{Subs_UB} We already know that ${\mc C}(A) \leq {\mc C}(1)=2$. The other upper bound comes from duality considerations. Suppose that $\varphi \in L^{\infty}(\R)$ is such that its distributional Fourier transform is identically equal to $1$ on the interval $(-1,1)$ and $\big|\widehat{\varphi}(t) - 1\big| \leq A$ for all $t \in \R$. Then, for $F \in \mc{A} \cap {\mc S}(\R)$ (as discussed in \S \ref{Sec_Approx}), we have
\begin{align*}
\|\varphi\|_{\infty} \int_{-\infty}^{\infty}|F(x)| \,\dx \geq \left|\int_{-\infty}^{\infty} F(x)\,\varphi(x) \,\dx \right| = \left|\int_{-\infty}^{\infty} \widehat{F}(t)\,\widehat{\varphi}(t) \,\dt \right| \geq |F(0)| - A\int_{[-1,1]^c} \big|\widehat{F}(t)\big|\,\dt.
\end{align*}
This leads to 
$ {\mc C}(A) \leq \|\varphi\|_{\infty}$.

\smallskip

Let $\psi$ be defined by \eqref{June08_4:46pm}. The idea is to mollify this function (used in the case $A = \infty$) in order to ``bring down the delta functions'' in its Fourier transform into the acceptable range $\big|\widehat{\varphi}(t) - 1\big| \leq A$ for all $t \in \R$. First we dilate $\widehat{\psi}$ defined by \eqref{June08_5:17pm}  to push the delta functions away from the interval $[-1,1]$, in other words, for $\gamma >1$, we observe that
\begin{equation*}
\widehat{\psi}(t/\gamma) = \widehat{\phi}(t/\gamma)  + \sum_{n=1}^{\infty} \gamma a_n \big(\delta(t-\gamma n) + \delta(t+\gamma n)\big).
\end{equation*}
Let $R(t) = \chi_{[-1/2,1/2]}(t)$. For $\lambda >0$, we write $R_{\lambda}(t) = \lambda^{-1} R(t/\lambda)$ and define
\begin{align}\label{June09_4:47pm}
\begin{split}
\widehat{\varphi}(t) &:= \big(\widehat{\psi}(\cdot/\gamma) * R_{\lambda}\big)(t) \\
& \,\,= \big(\widehat{\phi}(\cdot/\gamma) * R_{\lambda}\big)(t) + \sum_{n=1}^{\infty} \frac{\gamma a_n}{\lambda}\big(\chi_{[-\frac{\lambda}{2}, \frac{\lambda}{2}]}(t - \gamma n) + \chi_{[-\frac{\lambda}{2}, \frac{\lambda}{2}]}(t + \gamma n)\big).
\end{split}
\end{align}
Recall that $|a_n| \leq |a_1| < 0.6$ for all $n \geq 1$. Let $c = 0.6$, so that all the delta functions in \eqref{June08_5:17pm} have coefficients at most $c$. Let us assume that $A \geq 2 + c$ (so that our particular choices of $\lambda$ and $\gamma$ below verify $0<\lambda \leq \gamma$). We choose $\gamma -1 = \frac{\lambda}{2}$ (so that the support of the mollified delta functions in \eqref{June09_4:47pm} stay away from the interval $(-1,1)$) and $\frac{c\gamma}{\lambda} = A-2$ (so that the height of the mollified delta functions in \eqref{June09_4:47pm} is at most $A-2$). This leads to the explicit forms
$$\lambda = \frac{2}{ \frac{2}{c}(A-2) -1} \ \ \ {\rm and} \ \ \ \gamma =  \frac{1}{1 - \frac{c}{2(A-2)}}.$$
From \eqref{June09_4:47pm} we conclude that $\widehat{\varphi}(t) = 1$ for $t \in (-1,1)$ and, since $\big\|\widehat{\phi}\big\|_{\infty} =1$, we also have $|\widehat{\varphi}(t)| \leq A-1$ for all $t \in \R$, which in particular implies that $\big|\widehat{\varphi}(t) - 1\big| \leq A$ for all $t \in \R$ (note that the mollified delta functions on the right-hand side of \eqref{June09_4:47pm} have disjoint supports due to the fact that $\lambda \leq \gamma$). Since $\varphi(x) = \gamma\, \psi(\gamma x)\,\widehat{R}(x/\lambda)$, our upper bound is then $\|\varphi\|_{\infty} = \gamma \|\psi\|_{\infty} = \gamma \,d_0$.

\subsection{Proof of Theorem \ref{Thm2} (a.2)} We proceed again via duality considerations. Suppose that $\Psi \in L^{\infty}(\R)$ is a real-valued function such that its distributional Fourier transform is identically equal to $1$ on the interval $(-1,1)$ and $ \widehat{\Psi}(t) - 1 \leq 0$ for all $t \in \R$. Then, for $F \in \E^+ \cap {\mc S}(\R)$ (as discussed in \S \ref{Sec_Approx}), we have
\begin{align*}
\|\Psi\|_{\infty} \int_{-\infty}^{\infty}|F(x)| \,\dx \geq \int_{-\infty}^{\infty} F(x)\,\Psi(x) \,\dx  = \int_{-\infty}^{\infty} \widehat{F}(t)\,\widehat{\Psi}(t) \,\dt  \geq F(0),
\end{align*}
which implies that 
\begin{equation*}
 {\mc C}^{+}(\infty) \leq \|\Psi\|_{\infty}.
 \end{equation*}
 Experimentation gave the following numerical example.   Let $a=0.018$, $b=0.027$, and $c=0.002$, and consider 
 \begin{align}\label{June13_11:39am}
 \begin{split}
 \Psi(x) & = \frac{\sin(2 \pi x)}{\pi x} + \frac{2 \sin(a \pi x)}{\pi x} \cos(3 \pi x) + \frac{2\sin(b \pi x)}{\pi x}\cos(4 \pi x) + \frac{2\sin(c \pi x)}{\pi x}\cos(10\pi x)\\
 &  \ \ \ \ \ \ \ \ \ \ - 0.888 \cos(2 \pi x) - 0.01 \cos(6 \pi x), 
 \end{split}
 \end{align}
which has Fourier transform 
  \begin{equation}\label{June13_11:44am}
  \begin{split}
  \widehat{\Psi}(t) & = \chi_{[-1,1]}(t) + \chi_{[-a/2,a/2]}(t-\tfrac32) + \chi_{[-a/2,a/2]}(t+\tfrac32) 
  \\
  &  \qquad  + \chi_{[-b/2,b/2]}(t-2) + \chi_{[-b/2,b/2]}(t+2)  
  \\
  & \qquad + \chi_{[-c/2,c/2]}(t-5) + \chi_{[-c/2,c/2]}(t+5)
  \\
  & \qquad   - 0.444 (\delta(t +1) + \delta(t-1)) - 0.005(\delta(t +3) + \delta(t-3)).
  \end{split}
\end{equation}
For this test function we have
$\|\Psi\|_{\infty} < 1.2.$

\begin{figure} \label{figure2}
\includegraphics[scale=.4]{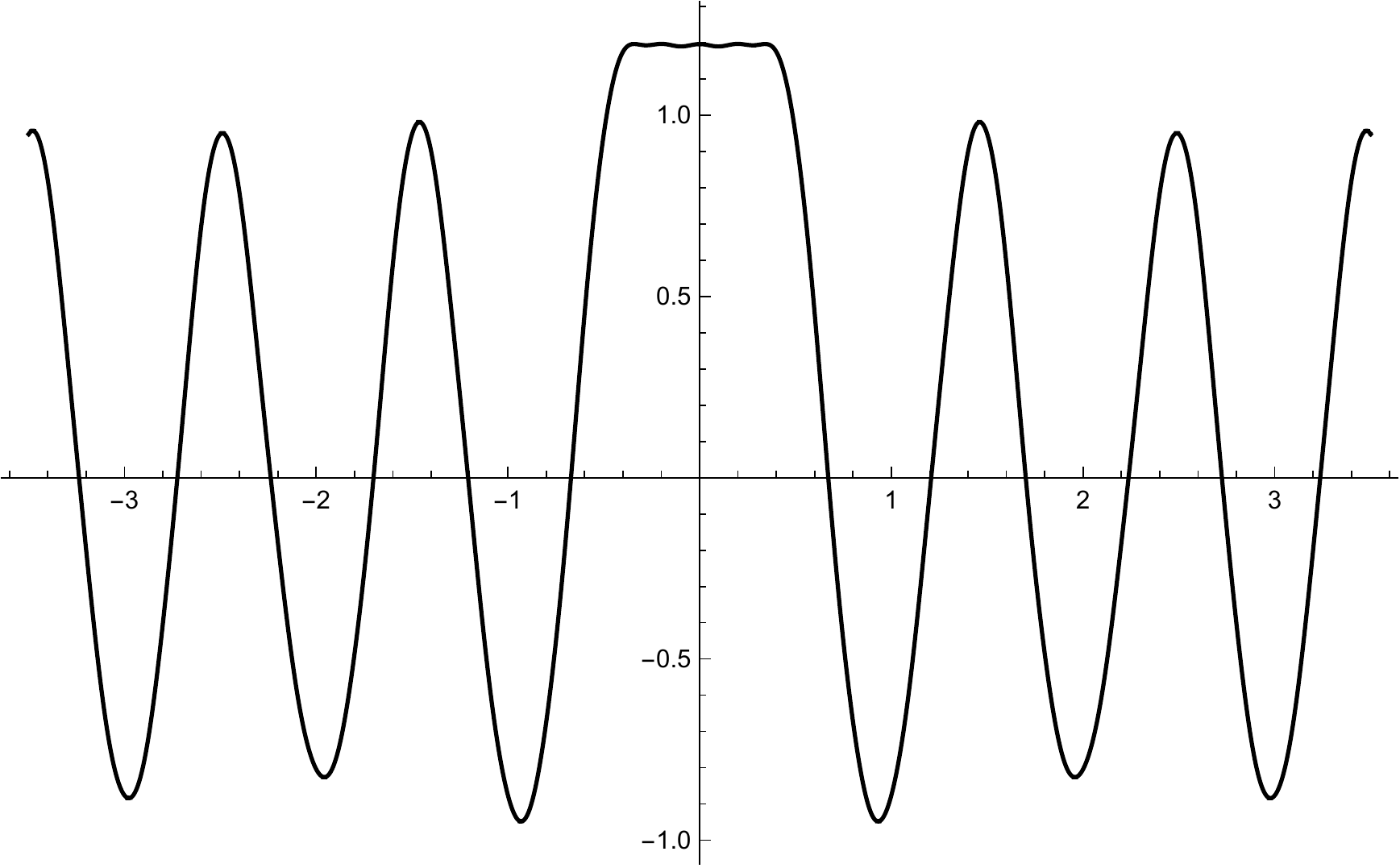} \qquad 
\includegraphics[scale=.4]{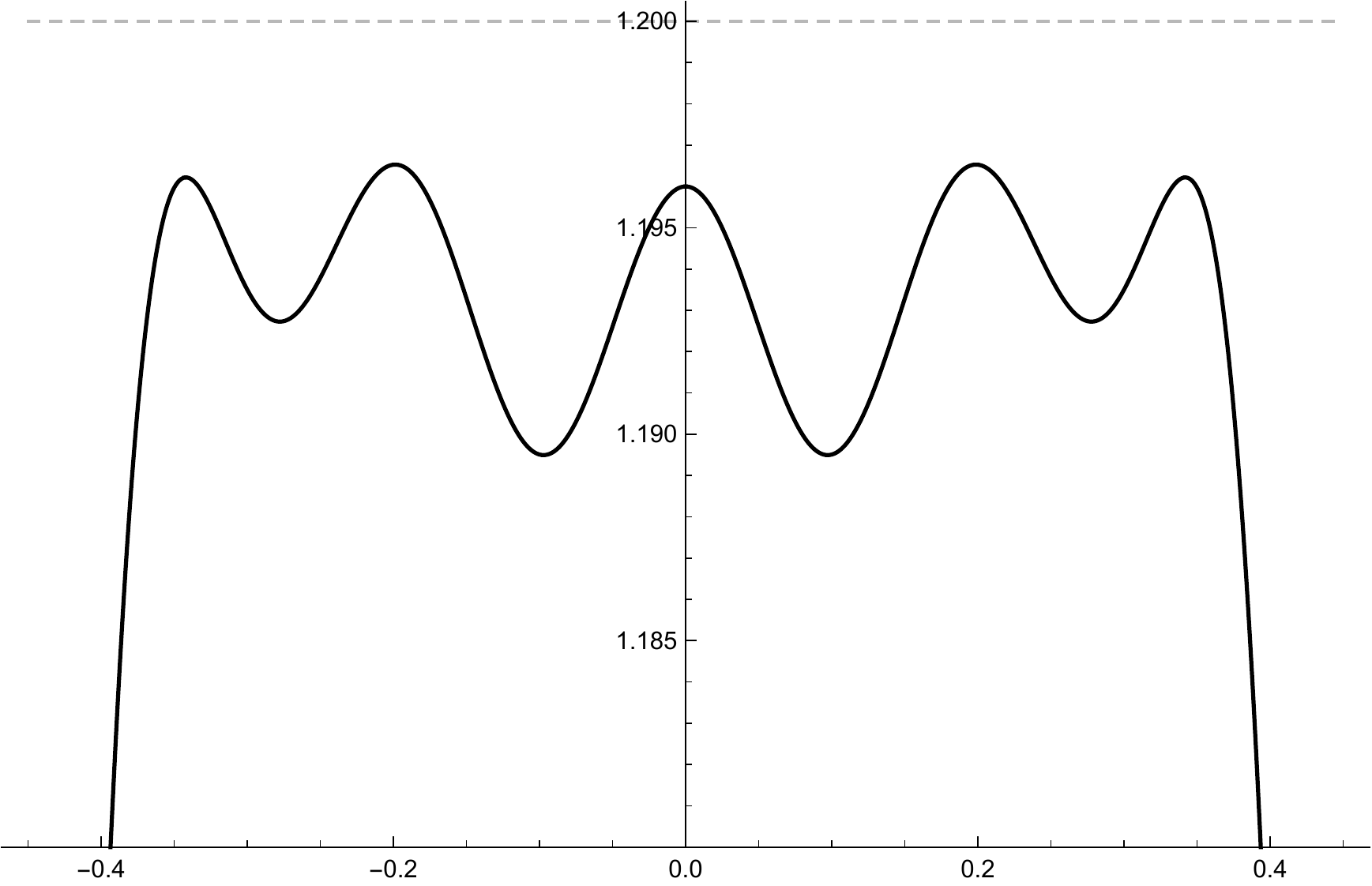} 
\caption{Graph of the function $\Psi$ in \eqref{June13_11:39am} in two different scales.}
\end{figure}

\subsection{Proof of Theorem \ref{Thm2} (c.2)} 
We have already seen that ${\mc C}^+(A) \leq {\mc C}^+(1)=2$. The other upper bound comes from the following dual formulation. Suppose that $\Phi \in L^{\infty}(\R)$ is a real-valued function such that its distributional Fourier transform is identically equal to $1$ on the interval $(-1,1)$ and $ - A \leq\widehat{\Phi}(t) - 1 \leq 0$ for all $t \in \R$. Then, for $F \in {\mc A}^+ \cap {\mc S}(\R)$ (as discussed in \S \ref{Sec_Approx}), we have
\begin{align*}
\|\Phi\|_{\infty} \int_{-\infty}^{\infty}|F(x)| \,\dx \geq \int_{-\infty}^{\infty} F(x)\,\Phi(x) \,\dx  = \int_{-\infty}^{\infty} \widehat{F}(t)\,\widehat{\Phi}(t) \,\dt  \geq F(0) - A\int_{[-1,1]^c} \big(\widehat{F}(t)\big)_+\,\dt\,,
\end{align*}
which leads to 
\begin{equation*}
 {\mc C}^{+}(A) \leq \|\Phi\|_{\infty}.
 \end{equation*}
The idea is to mollify the test function in \eqref{June13_11:39am} to bring down the delta functions to the required range, as done in \S \ref{Subs_UB}. Let $c = 0.444$ be the largest coefficient of a delta function in \eqref{June13_11:44am} and assume a priori that $A > 1 + \frac{c}{2}$ (so that our choice of $\lambda$ below is in fact positive). With the same notation as in \eqref{June09_4:47pm} we choose $\gamma -1 = \frac{\lambda}{2}$ and $\frac{c\gamma}{\lambda} = A-1$. Note that the four delta functions in \eqref{June13_11:44am} have negative coefficients, while the rest of the Fourier transform lies between $0$ and $1$, so we may take $A-1$ here instead of $A-2$. Moreover, since these delta functions are supported in non-consecutive integers, the condition $\gamma -1 = \frac{\lambda}{2}$ already guarantees that the mollified delta functions will not overlap (hence we do not need to assume here that $\lambda \leq \gamma$). This yields
$$\lambda = \frac{2}{ \frac{2}{c}(A-1) -1} \ \ \ {\rm and} \ \ \ \gamma =  \frac{1}{1 - \frac{c}{2(A-1)}}.$$
Since $\Phi(x) = \gamma\, \Psi(\gamma x)\,\widehat{R}(x/\lambda)$, our upper bound is $\|\Phi\|_{\infty} \leq \gamma \|\Psi\|_{\infty} < \gamma \times 1.2 $. 

\subsection{Proof of Theorem \ref{Thm2} (c.3)} For the specific value of  $A = \frac{36}{11}$, the lower bound described in \eqref{June01_3:17pm} and \eqref{May31_11:57pm} yields ${\mc C}^+\!(\tfrac{36}{11}) \geq 1.1569... $.  We found a better 
example through experimentation.  The function 
\begin{align}\label{June16_10:01am}
F(x) = -4.8\,x^2 e^{-3.3 x^2} + 1.5 \,x^2 e^{-7.4 x^2} + 520 \,x^{24} e^{-9.7 x^2} + 1.3\,e^{-2.8x^2} + 0.18 \,e^{-2x^2}
\end{align}
gives 
\begin{align*}
\frac{F(0) - A\int_{[-1,1]^c} \big(\widehat{F}(t)\big)_+\,\dt}{\|F\|_{1}}  = 1.1943... > \frac{25}{21}.
\end{align*}
We have found more complicated examples that do slightly better.

\begin{figure} \label{figure3}
\includegraphics[scale=.4]{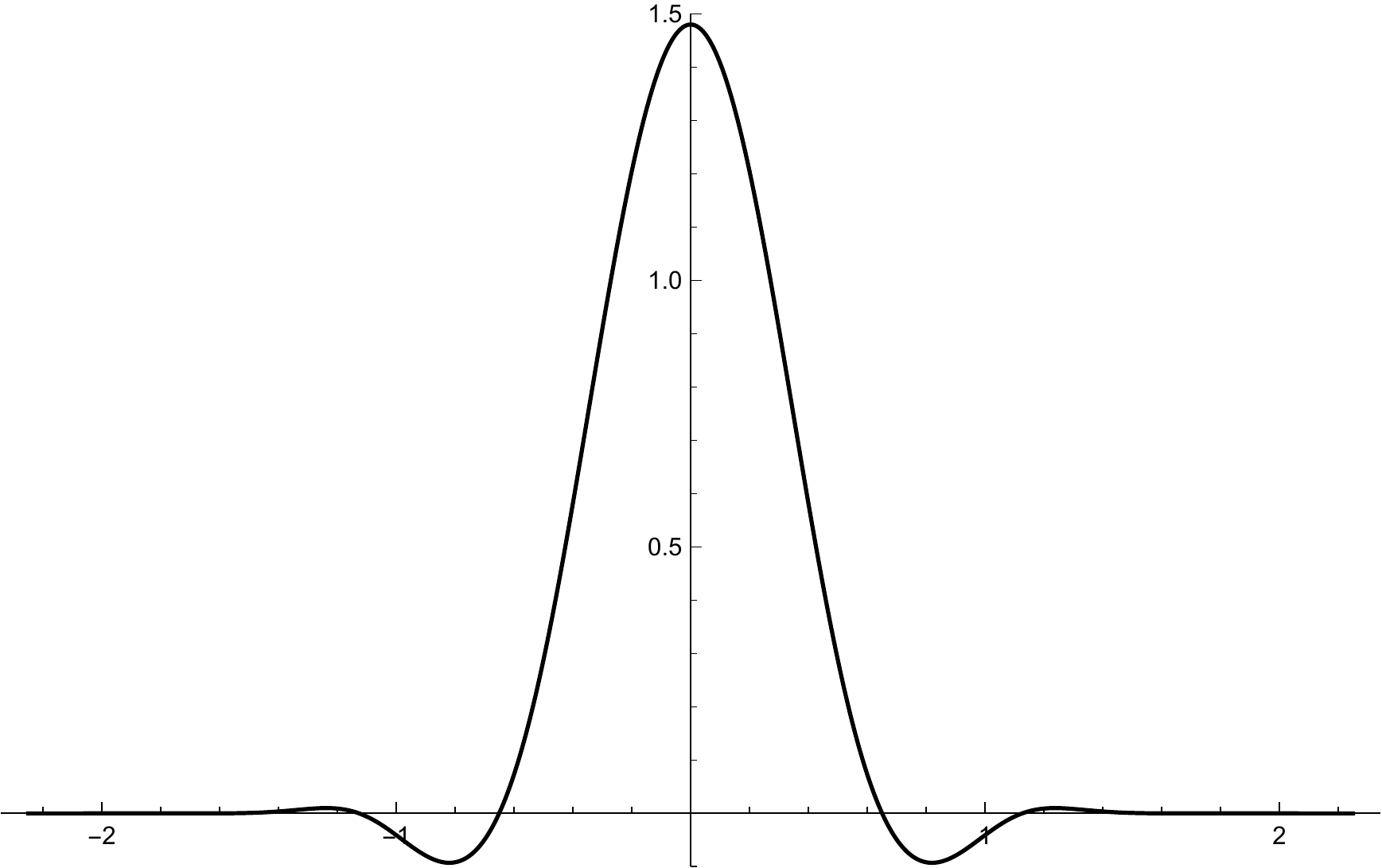} \qquad 
\includegraphics[scale=.4]{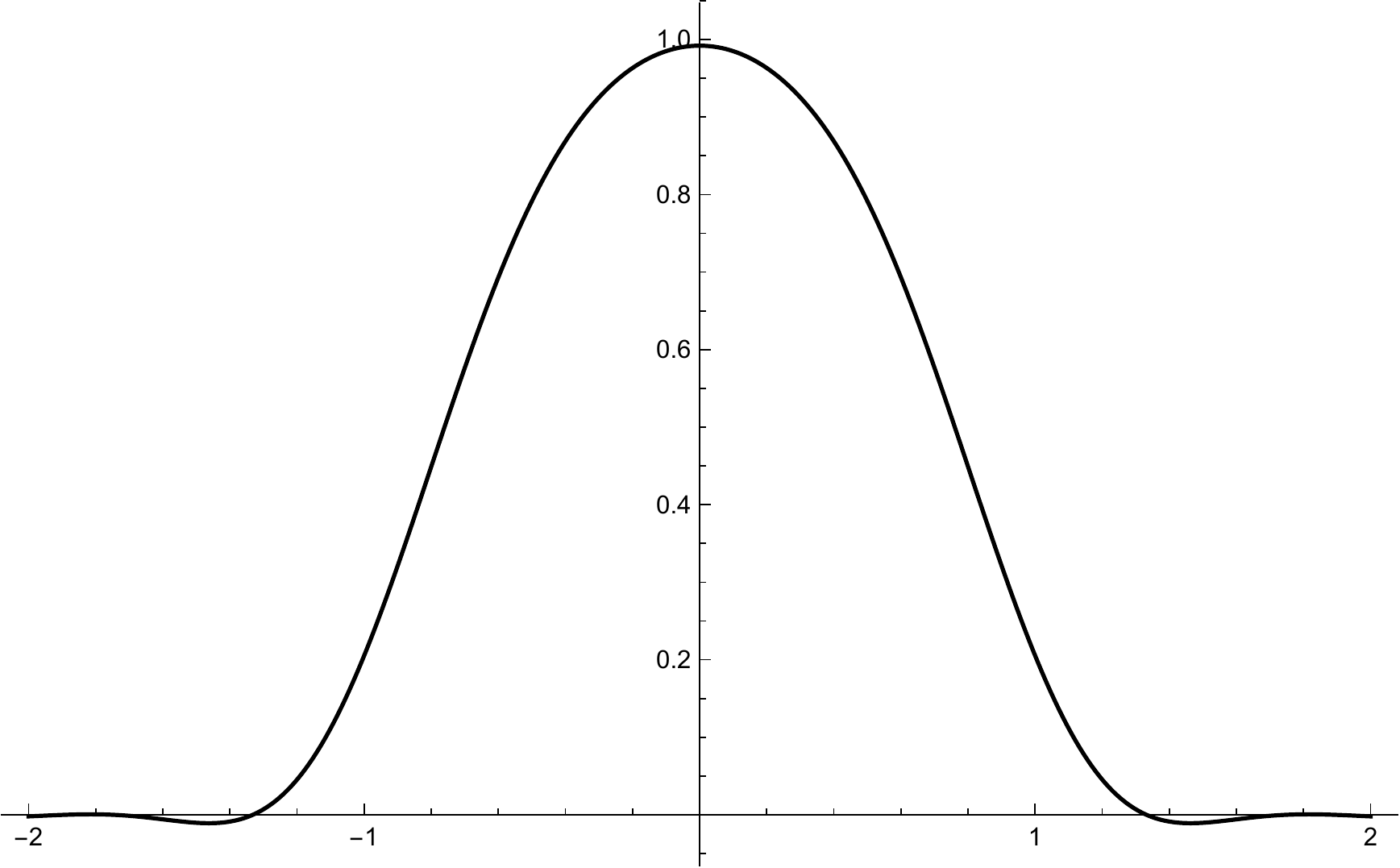} 
\caption{Graph of the function $F$ in \eqref{June16_10:01am} on the left, and graph of $\widehat{F}$ on the right.}
\end{figure}

\section{Prime gaps --- asymptotic version}\label{Sec3}

In this section we prove Theorem \ref{Thm3}. The proof uses two main tools: the explicit formula connecting the prime numbers and the zeros of the Riemann zeta-function, and the Brun-Titchmarsh inequality as expressed in \eqref{May31_2:05pm} and \eqref{May31_12:58pm}.

\begin{lemma}[Guinand-Weil explicit formula] \label{GW}
Let $h(s)$ be analytic in the strip $|\im{s}|\leq \tfrac12+\varepsilon$ for some $\varepsilon>0$, and assume that $|h(s)|\ll(1+|s|)^{-(1+\delta)}$ for some $\delta>0$ when $|\re{s}|\to\infty$. Then
	\begin{align*}
	\displaystyle\sum_{\rho}h\left(\frac{\rho - \tfrac12}{i}\right) & = h\left(\dfrac{1}{2i}\right)+h\left(-\dfrac{1}{2i}\right)-\dfrac{1}{2\pi}\widehat{h}(0)\log\pi+\dfrac{1}{2\pi}\int_{-\infty}^{\infty}h(u)\,\re{\dfrac{\Gamma'}{\Gamma}\!\left(\dfrac{1}{4}+\dfrac{iu}{2}\right)}\,\du \\
	 &  \ \ \ \ \ \ \ \ \ \ \ \ \ -\dfrac{1}{2\pi}\displaystyle\sum_{n\geq2}\dfrac{\Lambda(n)}{\sqrt{n}}\left(\widehat{h}\left(\dfrac{\log n}{2\pi}\right)+\widehat{h}\left(\dfrac{-\log n}{2\pi}\right)\right)\,, 
	\end{align*}
where $\rho = \beta + i \gamma$ are the non-trivial zeros of $\zeta(s)$, $\Gamma'/\Gamma$ is the logarithmic derivative of the Gamma function, and $\Lambda(n)$ is the Von-Mangoldt function defined to be $\log p$ if $n=p^m$ with $p$ a prime number and $m\geq 1$ an integer, and zero otherwise.
\end{lemma}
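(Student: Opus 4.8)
The plan is to establish this by the classical contour-integration argument applied to the logarithmic derivative of the completed zeta function $\xi(s) = \tfrac12 s(s-1)\pi^{-s/2}\Gamma(s/2)\zeta(s)$, whose zeros are precisely the nontrivial zeros $\rho$ of $\zeta$. Set $g(s) = h\big(\tfrac{s-1/2}{i}\big)$; by hypothesis $g$ is analytic, and $|g(s)|\ll (1+|s|)^{-1-\delta}$, in the vertical strip $-\varepsilon\le \re s\le 1+\varepsilon$. First I would apply the residue theorem to $\frac{1}{2\pi i}\oint \frac{\xi'}{\xi}(s)\,g(s)\,\d s$ over the rectangle with vertices $-\varepsilon\pm iT$ and $1+\varepsilon\pm iT$: the enclosed poles are the $\rho$, with residues $g(\rho)=h\big(\tfrac{\rho-1/2}{i}\big)$, and the horizontal segments tend to $0$ along a suitable sequence $T=T_n\to\infty$ avoiding ordinates of zeros, because $|g(\sigma\pm iT)|\ll (1+T)^{-1-\delta}$ while the standard bound $\frac{\xi'}{\xi}(\sigma\pm iT)\ll \log^2 T$ holds uniformly on the segment. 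One also checks $\sum_\rho |g(\rho)|<\infty$ from $g(\rho)\ll (1+|\gamma|)^{-1-\delta}$ together with the Riemann--von Mangoldt zero count.

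Next I would push the two vertical sides to $\re s=c$ and $\re s=1-c$ for a fixed $c\in(1,1+\varepsilon]$ (no zeros are crossed) and use the functional equation $\xi(s)=\xi(1-s)$, hence $\frac{\xi'}{\xi}(1-s)=-\frac{\xi'}{\xi}(s)$, to fold the line $\re s=1-c$ onto $\re s=c$, obtaining $\sum_\rho g(\rho)=\frac{1}{2\pi i}\int_{(c)}\frac{\xi'}{\xi}(s)\,G(s)\,\d s$ with $G(s):=h\big(\tfrac{s-1/2}{i}\big)+h\big(-\tfrac{s-1/2}{i}\big)$. Using $\frac{\xi'}{\xi}(s)=\frac1s+\frac1{s-1}-\tfrac12\log\pi+\tfrac12\frac{\Gamma'}{\Gamma}(s/2)+\frac{\zeta'}{\zeta}(s)$, I would treat the five pieces on $\re s=c$ separately. (i) For the $\frac{\zeta'}{\zeta}$ piece, expand $\frac{\zeta'}{\zeta}(s)=-\sum_{n\ge2}\Lambda(n)n^{-s}$ (valid for $\re s>1$) and shift each term to $\re s=\tfrac12$, where $n^{-s}G(s)$ is holomorphic so no poles are crossed; on $\re s=\tfrac12$, writing $s=\tfrac12+iu$ identifies $\frac{1}{2\pi i}\int_{(1/2)}n^{-s}G(s)\,\d s=\frac{1}{2\pi\sqrt n}\big(\widehat h(\tfrac{\log n}{2\pi})+\widehat h(\tfrac{-\log n}{2\pi})\big)$, which yields the prime sum. (ii) For $-\tfrac12\log\pi$, shift to $\re s=\tfrac12$ and use $\int_{\R}(h(u)+h(-u))\,\du=2\widehat h(0)$ to get $-\tfrac{1}{2\pi}\widehat h(0)\log\pi$. (iii) For $\tfrac12\frac{\Gamma'}{\Gamma}(s/2)$, shift to $\re s=\tfrac12$, substitute $s=\tfrac12+iu$, symmetrize, and recognize $\tfrac12\big(\frac{\Gamma'}{\Gamma}(\tfrac14+\tfrac{iu}{2})+\frac{\Gamma'}{\Gamma}(\tfrac14-\tfrac{iu}{2})\big)=\re\frac{\Gamma'}{\Gamma}(\tfrac14+\tfrac{iu}{2})$, producing the archimedean integral. (iv) For $\frac1s+\frac1{s-1}$, shift from $\re s=c$ to $\re s=\tfrac12$, crossing only the simple pole at $s=1$, whose residue contributes $G(1)=h(\tfrac1{2i})+h(-\tfrac1{2i})$, while the leftover integral on $\re s=\tfrac12$ vanishes by parity (restricted to $\re s=\tfrac12$ the function $\frac1s+\frac1{s-1}$ is odd and purely imaginary in $u$, integrated against the even function $h(u)+h(-u)$). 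Collecting the five contributions gives the stated identity.

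The only place requiring genuine care is the justification of the contour manipulations: the uniform estimate for $\frac{\xi'}{\xi}$ along horizontal segments at well-chosen heights $T_n$ (standard), and the legitimacy of interchanging $\sum_{n\ge2}\Lambda(n)$ with the contour integral after the shift to $\re s=\tfrac12$. The latter rests on the key observation that analyticity of $h$ in the strip $|\im s|\le\tfrac12+\varepsilon$ together with its polynomial decay forces $\widehat h(\xi)\ll e^{-(1+2\varepsilon)\pi|\xi|}$ (shift the Fourier integral to $\im u=\mp(\tfrac12+\varepsilon)$), hence $\widehat h(\pm\tfrac{\log n}{2\pi})\ll n^{-1/2-\varepsilon}$ and $\sum_{n\ge2}\frac{\Lambda(n)}{\sqrt n}\,\big|\widehat h(\pm\tfrac{\log n}{2\pi})\big|\ll\sum_n\Lambda(n)n^{-1-\varepsilon}<\infty$, so the prime sum converges absolutely and all interchanges and contour shifts are valid. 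This is the classical Guinand--Weil explicit formula; alternatively one may simply invoke it from a standard reference (e.g.\ Iwaniec--Kowalski or Montgomery--Vaughan), but the argument above is the one I would write out.
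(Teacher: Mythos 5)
You give a complete contour-integration derivation, whereas the paper's entire proof of Lemma~\ref{GW} is a citation to \cite[Theorem 5.12]{IK}. Your argument is the standard route to the Guinand--Weil formula: apply the argument principle to $\frac{\xi'}{\xi}\cdot g$ on a rectangle, verify that the horizontal pieces vanish along well-chosen heights, fold the line $\re s = 1-c$ onto $\re s = c$ via $\xi(s)=\xi(1-s)$, decompose $\frac{\xi'}{\xi}$ into its five pieces, and shift each to $\re s = \tfrac12$. I checked each piece and the bookkeeping is right: the fold correctly produces $G(s)=h\bigl(\tfrac{s-1/2}{i}\bigr)+h\bigl(-\tfrac{s-1/2}{i}\bigr)$; the pole at $s=1$ contributes $G(1)=h(\tfrac{1}{2i})+h(-\tfrac{1}{2i})$ while the leftover $\frac1s+\frac1{s-1}$ integral on $\re s=\tfrac12$ vanishes by the odd/even parity argument; the $\log\pi$ and $\Gamma'/\Gamma$ pieces give the stated archimedean terms after symmetrizing; and the $\zeta'/\zeta$ piece yields the prime sum via $\frac{1}{2\pi i}\int_{(1/2)}n^{-s}G(s)\,\ds=\frac{1}{2\pi\sqrt n}\bigl(\widehat h(\tfrac{\log n}{2\pi})+\widehat h(-\tfrac{\log n}{2\pi})\bigr)$. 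The key analytic justifications you supply are also correct and necessary: the Riemann--von Mangoldt count gives absolute convergence of $\sum_\rho g(\rho)$ from $g(\rho)\ll(1+|\gamma|)^{-1-\delta}$, and shifting the Fourier integral to $\im u=\mp(\tfrac12+\varepsilon)$ gives $\widehat h(\xi)\ll e^{-(1+2\varepsilon)\pi|\xi|}$, hence $\sum_{n}\Lambda(n)n^{-1-\varepsilon}<\infty$, which legitimizes interchanging $\sum_n$ with the contour integral. In short, the paper delegates the proof to a reference and you wrote out what that reference proves; the two routes are the same in substance, and you even flag at the end that citing Iwaniec--Kowalski would suffice.
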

\begin{proof} The proof follows from \cite[Theorem 5.12]{IK}. 
\end{proof}
 
\subsection{Set-up}  Motivated by the discussion in \S \ref{Sec_Approx}, throughout this section we fix $F:\R \to \R$ to be an even and bandlimited Schwartz function, with $F(0) > 0$. Let us assume that $\supp(\widehat{F}) \subset [-N,N]$ for some parameter $N\ge 1$. It then follows that $F$ extends to an entire function (which we continue calling $F$) and the fact that $x^2F(x) \in L^{\infty}(\R)$ implies, via the Phragm\'{e}n-Lindel\"{o}f principle, that $|F(s)|\ll(1+|s|)^{-2}$ when $|\re{s}|\to\infty$. We may therefore apply the explicit formula (Lemma \ref{GW}).

\smallskip

Our idea to approach this problem can be summarized as follows. We use the explicit formula above to measure the size of an interval that does not contain too many primes. Note that the information about the primes is on the right-hand side of the formula, while on the left-hand side we have information on the zeros of $\zeta(s)$. We modify our test function $F$ in such a way that $\widehat{F}$ emphasizes the information on said interval, translating  and rescaling $\widehat{F}$ to concentrate the mass of $\widehat{F}$ on this interval. We then try to understand the effect of this localization in all the terms of the formula through an asymptotic analysis. 
Since the function $\widehat{F}$ must be small near its endpoints, it is advantageous to use the Brun-Titchmarsh inequality to (over) estimate the contribution from the primes on the edges of the interval.

\smallskip

Let $0< \Delta \leq 1$ and $1<a$ be free parameters to be   chosen later. We anticipate that we will be choosing $\Delta \to 0^+$ and $a \to \infty$, so it is not harmful to further assume that 
\begin{equation}\label{April28_11:18am}
2\pi\Delta N\leq \log a.
\end{equation}
Define $f(z) := \Delta F(\Delta z)$ and note that $\supp(\widehat{f}) \subset [-\Delta N,\Delta N]$. Assuming RH, an application of the explicit formula (Lemma \ref{GW}) to the entire function $h(z)= f(z)a^{iz}$ yields the following inequality:
\begin{align}\label{April21_11:23am}
\begin{split}
 f \left(\dfrac{1}{2i}\right) a^{1/2}+f\left(-\dfrac{1}{2i}\right) a^{-1/2} & \leq \sum_{\gamma} |f(\gamma)| + \dfrac{1}{2\pi}\widehat{f}\left(-\frac{\log a }{2\pi}\right)\log\pi \\
& \ \ \ \ \ \ \ + \left|\dfrac{1}{2\pi}\int_{-\infty}^{\infty}f(u)\,a^{iu}\,\re{\dfrac{\Gamma'}{\Gamma}\!\left(\dfrac{1}{4}+\dfrac{iu}{2}\right)}\,\du\right| \\
&   \ \ \ \ \ \ \ +\dfrac{1}{2\pi}\displaystyle\sum_{n\geq2}\dfrac{\Lambda(n)}{\sqrt{n}}\left(\widehat{f}\left(\dfrac{\log (n/a)}{2\pi}\right)_++\widehat{f}\left(-\dfrac{\log na}{2\pi}\right)_+\right).
\end{split}
\end{align}

\subsection{Proof of Theorem \ref{Thm3}} The idea is to proceed with an asymptotic evaluation of both sides of \eqref{April21_11:23am}. We start with its left-hand side. Note that 
\begin{align*}
\begin{split}
f \left(\dfrac{1}{2i}\right) & = \Delta F \left(\dfrac{\Delta}{2i}\right) = \Delta \int_{-N}^{N} e^{\pi t \Delta} \widehat{F}(t)\,\dt\\
& = \Delta \int_{-N}^{N} \widehat{F}(t)\,\dt + \Delta \int_{-N}^{N} \left(e^{\pi t \Delta} - 1\right) \widehat{F}(t)\,\dt\\
& = \Delta F(0) + O(\Delta^2).
\end{split}
\end{align*}
Therefore, the left-hand side of \eqref{April21_11:23am} equals
\begin{align*}
f \left(\dfrac{1}{2i}\right) a^{1/2}+f\left(-\dfrac{1}{2i}\right) a^{-1/2} = \Delta F(0) \big(a^{1/2} + a^{-1/2}\big) + O(\Delta^2 a^{1/2}). 
\end{align*}
For the right-hand side of \eqref{April21_11:23am}, we first consider the error terms. From \eqref{April28_11:18am} we have
\begin{align*}
\dfrac{1}{2\pi}\widehat{f}\left(-\frac{\log a }{2\pi}\right)\log\pi = \dfrac{1}{2\pi}\widehat{F}\left(-\frac{\log a }{2\pi\Delta}\right)\log\pi = 0.
\end{align*}
Also, using Stirling's formula $\frac{\Gamma'}{\Gamma}(s) = \log s + O(|s|^{-1})$ and \eqref{April28_11:18am}, we get
\begin{align*}
\begin{split}
\int_{-\infty}^{\infty}f(u)& \,a^{iu}\,\re{\dfrac{\Gamma'}{\Gamma}\!\left(\dfrac{1}{4}+\dfrac{iu}{2}\right)}\,\du = \int_{-\infty}^{\infty}F(y)\,e^{2 \pi i y (\frac{\log a}{2\pi \Delta})}\log\left|\dfrac{1}{4}+\dfrac{iy}{2\Delta}\right|\,\dy + O(1)\\
& =  \int_{-\infty}^{\infty}F(y)\,e^{2 \pi i y (\frac{\log a}{2\pi \Delta})}\left( \frac12\log( \Delta^2 + 4y^2) + \log\left(\dfrac{1}{4\Delta}\right)\right)\,\dy + O(1)\\
& = \log\left(\dfrac{1}{4\Delta}\right) \widehat{F}\left(-\frac{\log a }{2\pi\Delta}\right) + O(1) = O(1).
\end{split}
\end{align*}
Thus, we have deduced that
\begin{equation}\label{middle step}
\begin{split}
 \Delta F(0) \big(a^{1/2} + a^{-1/2}\big)  & \leq \sum_{\gamma} |f(\gamma)| + \dfrac{1}{2\pi}\displaystyle\sum_{n\geq2}\dfrac{\Lambda(n)}{\sqrt{n}}\left(\widehat{f}\left(\dfrac{\log (n/a)}{2\pi}\right)_++\widehat{f}\left(-\dfrac{\log na}{2\pi}\right)_+\right) \\
& \ \ \ \ \ \ \  + O(\Delta^2 a^{1/2}) + O(1). 
\end{split}
\end{equation}
It remains to estimate the two remaining sums on right-hand side of this inequality.

\subsubsection{The sum over zeros} Let $N(x)$ denote the number of zeros $\rho = \beta + i\gamma$ of $\zeta(s)$ with $0 < \gamma \leq x$. Using the fact that $N(x) = \frac{x}{2\pi} \log\frac{x}{2\pi} - \frac{x}{2\pi} + O(\log x)$, we evaluate the sum $\sum_{\gamma} |f(\gamma)|$ using summation by parts to get
$$\sum_{\gamma} |f(\gamma)| = \frac{1}{2 \pi} \int_{-\infty}^{\infty} |f(x)|\, \log^+\!\frac{|x|}{2\pi}\,\dx + O\big(\|f\|_{\infty}  + \| f'(x)\,\log^+\!|x|\|_1\big),$$
where $\log^+\!x = \max\{\log x,0\}$ for $x >0$. Recalling that $f(x) = \Delta F(\Delta x)$, this yields
\begin{align}\label{April20_4:49pm}
\begin{split}
\sum_{\gamma} |f(\gamma)| & = \frac{1}{2 \pi} \int_{-\infty}^{\infty} |F(y)|\, \log^+\!|y/2\pi\Delta|\,\dy + O(1) \\
& = \frac{\log(1/2\pi \Delta)}{2\pi} \|F\|_1 + O(1).
\end{split}
\end{align}

\subsubsection{The sum over primes and the choice of parameters} 
Fix $\alpha \geq0$ and assume that $c$ is a fixed positive constant such that 
$$\liminf_{x \to \infty} \frac{\pi\big(x + c \sqrt{x}\log x\big ) - \pi(x)}{\sqrt{x}} \leq  \alpha.$$
Then, given $\varepsilon >0$, there exists a sequence of $x \to \infty$ such that 
\begin{equation}\label{good_sequence}
\frac{\pi\big(x + c \sqrt{x}\log x\big ) - \pi(x)}{\sqrt{x}} \leq  \alpha + \varepsilon
\end{equation}
along this sequence. For each such $x$, we choose $a$ and $\Delta$ such that 
\begin{equation}\label{May01_11:47am}
[x, x + c \sqrt{x}\log x] = \left[a\,e^{-2 \pi \Delta}, a\, e^{2\pi\Delta}\right].
\end{equation}
Then (allowing the implicit constants in the big-$O$ notation here to depend on $c$) we have
\begin{equation}\label{Def_Delta_in_terms_of_x}
4\pi \Delta = \log\left(1 + c \frac{\log x}{\sqrt{x}}\right) = c \frac{\log x}{\sqrt{x}} + O\!\left(\frac{\log^2 x}{x}\right)
\end{equation}
and
\begin{equation}\label{Def_a_in_terms_of_x}
a = x \left( 1 + c \frac{\log x}{\sqrt{x}}\right)^{1/2} = x + O(\sqrt{x}\log x).
\end{equation}

\smallskip

By \eqref{April28_11:18am}, the sum we want to evaluate is
\begin{align}
\sum_{n\geq2}\dfrac{\Lambda(n)}{\sqrt{n}}\left(\widehat{f}\left(\dfrac{\log (n/a)}{2\pi}\right)_+ +\widehat{f}\left(-\dfrac{\log na}{2\pi}\right)_+\right) 
& = \sum_{n\geq2}\dfrac{\Lambda(n)}{\sqrt{n}}\widehat{F}\!\left(\dfrac{\log (n/a)}{2\pi\Delta}\right)_+.\label{April03_1:05pm}
\end{align}
Note that the last sum is supported on $n$ with $a\,e^{-2\pi \Delta N} \leq n \leq a\,e^{2\pi \Delta N}$. The contribution of the (at most) $(\alpha + \varepsilon) \sqrt{x}$ primes in the interval $(x, x + c \sqrt{x}\log x] = (a\,e^{-2\pi \Delta},a\,e^{2\pi \Delta}]$ to the sum \eqref{April03_1:05pm} is bounded above by (using the trivial bound $(\widehat{F}(t))_+ \leq \|F\|_1$)  
\begin{align*}
\leq \|F\|_1\sum_{p \in (a\,e^{-2\pi \Delta },a\,e^{2\pi \Delta }]}  \frac{\log p}{\sqrt{p}} & \leq  \|F\|_1 \,(\alpha + \varepsilon) \sqrt{x} \, \,\frac{\log x}{\sqrt{x}} =  \|F\|_1 \,(\alpha + \varepsilon)\,\log x.
\end{align*}
It is not hard to show that the contribution of the prime powers $n = p^k$ with $k \geq 2$ in the full interval $[a\,e^{-2\pi \Delta N},a\,e^{2\pi \Delta N}]$ to the sum \eqref{April03_1:05pm} is $O(1)$. It remains to estimate the contribution of the primes in the intervals $[a\,e^{-2\pi \Delta N},a\,e^{-2\pi \Delta}]$ and $[a\,e^{2\pi \Delta},a\,e^{2\pi \Delta N}]$, and for this we use  the Brun-Titchmarsh inequality. Let ${\bf B}$ be defined by \eqref{May31_2:05pm} and let ${\bf B'} > {\bf B}$. For $x$ sufficiently large we have 
\begin{align*}
\begin{split}
\sum_{1\leq |\frac{\log(p/a)}{2\pi\Delta}| \leq N}\frac{\log p}{\sqrt{p}}\,\widehat{F}\left(\dfrac{\log (p/a)}{2\pi\Delta}\right)_+ & \leq {\bf B'} \int_{1\leq |\frac{\log(t/a)}{2\pi\Delta}| \leq N}  \widehat{F}\left(\dfrac{\log (t/a)}{2\pi\Delta}\right)_+ \frac{\d t}{\sqrt{t}}\  + \ O(1)\\
& = {\bf B'}  \sqrt{a} \,(2\pi\Delta) \int_{[-1,1]^c}  \big(\widehat{F}(t)\big)_+\,\dt  \ +\  O(1).
\end{split}
\end{align*}
The inequality above can be seen by covering the intervals $[a\,e^{-2\pi \Delta N},a\,e^{-2\pi \Delta}]$ and $[a\,e^{2\pi \Delta},a\,e^{2\pi \Delta N}]$ by subintervals of size $\sqrt{a}$, and applying the Brun-Titchmarsh inequality in each summand of the corresponding Riemann-Stieltjes sum associated to this partition (the details of this argument are carried out in \S \ref{prime powers section} for a specific function and can be modified to handle the general case). Combining estimates, we see that 
\begin{equation}\label{sum bound}
\begin{split}
\sum_{n\geq2}\dfrac{\Lambda(n)}{\sqrt{n}}&\left(\widehat{f}\left(\dfrac{\log (n/a)}{2\pi}\right)_+ +\widehat{f}\left(-\dfrac{\log na}{2\pi}\right)_+\right) 
\\
& \qquad \qquad \le \|F\|_1 \,(\alpha + \varepsilon)\,\log x + {\bf B'}  \sqrt{a} \,(2\pi\Delta) \int_{[-1,1]^c}  \big(\widehat{F}(t)\big)_+\,\dt  \ +\  O(1).
\end{split}
\end{equation}

\subsubsection{Conclusion} Inserting the estimates in \eqref{April20_4:49pm} and \eqref{sum bound} into \eqref{middle step} and then rearranging terms, it follows that
\begin{align*}
\Delta \sqrt{a} \left( F(0) -  {\bf B'}  \int_{[-1,1]^c}  \big(\widehat{F}(t)\big)_+\,\dt \right) \leq \frac{\log(1/2\pi \Delta)}{2\pi} \|F\|_1 + \frac{1}{2\pi} \|F\|_1 \,(\alpha + \varepsilon)\,\log x + O(1)\,,
\end{align*}
where we have used \eqref{Def_Delta_in_terms_of_x} and \eqref{Def_a_in_terms_of_x} to combine the error terms. Sending $x \to \infty$ along the sequence \eqref{good_sequence}, we conclude that 
$$c \leq (1 + 2\alpha + 2 \varepsilon) \frac{\|F\|_1}{\left( F(0) -  {\bf B'}  \int_{[-1,1]^c}  \big(\widehat{F}(t)\big)_+\,\dt \right)}\,,$$
where we naturally assume that the denominator above is positive. Since this holds for all $\varepsilon >0$ and ${\bf B'} > {\bf B}$ we finally arrive at 
\begin{equation}\label{June14_2:23pm}
c \leq (1 + 2\alpha) \frac{\|F\|_1}{\left( F(0) -  {\bf B}  \int_{[-1,1]^c}  \big(\widehat{F}(t)\big)_+\,\dt \right)}.
\end{equation}
This is the connection to our extremal problem \eqref{Extremal_Problem_2} and the discussion in \S \ref{Sec_Approx} leads to the desired conclusion, since we may now optimize \eqref{June14_2:23pm} over such bandlimited $F$.

\section{Prime gaps --- explicit version}\label{Sec5}
We now move on to the proof of Theorem \ref{Thm5}. Instead of initially following the proof outlined in Section \ref{Sec3} with a particular choice of test function $F$ in the Guinand-Weil explicit formula (and carefully estimating the error terms), we start off slightly differently using a Mellin transform approach to the problem. For our fixed choice of test function, this approach simplifies some of our calculations. Moreover, it may be the case that the kernel we are using will be helpful in other applications. For a generic choice of test function, however, the Fourier transform approach to the problem used in the previous section is perhaps more illuminating. 

\begin{lemma}\label{Lem9}
Let $\vartheta$ and $\delta$ be positive numbers satisfying $\vartheta \delta = \pi/2$. Then, for $a> e^\delta$ and $\vartheta$ not an ordinate of a zero of $\zeta(s)$, we have
\begin{equation}\label{new explicit formula}
\begin{split}
\sum_{a e^{-\delta} \le n \le a e^{\delta}} \frac{\Lambda(n)}{\sqrt{n}} \cos \left( \vartheta \log \frac an\right) &=   \frac{\vartheta \sqrt{a}}{\frac{1}{4}+\vartheta^2} \big(e^{\delta/2} +e^{-\delta/2}\big) -  2 \vartheta \sum_{\gamma} \frac{ a^{i\gamma} \cos (\delta \gamma)}{\vartheta^2 -\gamma^2} 
\\
&\qquad - \sum_{n=1}^\infty \frac{\vartheta \, a^{-2n-1/2}}{(2n\!+\!\frac{1}{2})^2+\vartheta^2}\big(e^{(2n+1/2) \delta} +e^{-(2n+1/2)\delta}\big).
\end{split}
\end{equation}
Here the first sum on the right-hand side runs over the nontrivial zeros $\rho=1/2+i\gamma$ of $\zeta(s)$ where $\gamma \in \mathbb{C}$ with $|\mathrm{Re}(\gamma)|<1/2$.
\end{lemma}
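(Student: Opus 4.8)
The plan is to realise the left-hand side of \eqref{new explicit formula} by Mellin inversion against an explicit \emph{entire} kernel, substitute the Dirichlet series of $-\zeta'/\zeta$, and shift the contour far to the left, collecting one residue at the pole $s=1$ and one at each trivial and nontrivial zero of $\zeta$. Concretely, I would set
\[
w(x):=\frac{1}{\sqrt{x}}\,\cos\!\Big(\vartheta\log\frac{a}{x}\Big)\,\chi_{[ae^{-\delta},\,ae^{\delta}]}(x)\qquad(x>0),
\]
and compute its Mellin transform $W(s):=\int_0^\infty w(x)\,x^{s-1}\,\d x$. Substituting $x=ae^{u}$ reduces this to $a^{\,s-1/2}\int_{-\delta}^{\delta}e^{(s-1/2)u}\cos(\vartheta u)\,\d u$, and the elementary evaluation of this integral, using the hypothesis $\vartheta\delta=\pi/2$ (so $\cos\vartheta\delta=0$, $\sin\vartheta\delta=1$), yields
\[
W(s)=a^{\,s-1/2}\,\frac{\vartheta\big(e^{(s-1/2)\delta}+e^{-(s-1/2)\delta}\big)}{(s-\tfrac12)^2+\vartheta^2}.
\]
Two features of $W$ should be recorded: it is \emph{entire}, since the would-be poles at $s=\tfrac12\pm i\vartheta$ are cancelled by the numerator ($e^{\pm i\vartheta\delta}+e^{\mp i\vartheta\delta}=2\cos\vartheta\delta=0$), and $W(\sigma+it)=O_{a,\sigma}(|t|^{-2})$ uniformly on bounded $\sigma$-intervals. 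I would also note that $w$ is continuous on $(0,\infty)$ — at $x=ae^{\pm\delta}$ the weight equals $\cos(\mp\vartheta\delta)=0$ — so Mellin inversion $w(x)=\tfrac{1}{2\pi i}\int_{(c)}W(s)x^{-s}\,\d s$ holds pointwise, the endpoints of the summation cause no ambiguity (the summand vanishes at $n=ae^{\pm\delta}$), and $a>e^{\delta}$ makes the sum one over genuine integers $n\ge 2$.

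Next, fixing $c>1$ and using $-\zeta'/\zeta(s)=\sum_{n\ge2}\Lambda(n)n^{-s}$, I would interchange sum and integral by Fubini (justified by $W(\sigma+it)=O(|t|^{-2})$ on the line together with $\sum_n\Lambda(n)n^{-c}<\infty$), obtaining
\[
\sum_{ae^{-\delta}\le n\le ae^{\delta}}\frac{\Lambda(n)}{\sqrt n}\cos\!\Big(\vartheta\log\frac an\Big)=\sum_{n\ge2}\Lambda(n)\,w(n)=\frac{1}{2\pi i}\int_{(c)}\Big(-\frac{\zeta'}{\zeta}(s)\Big)W(s)\,\d s,
\]
and then shift the contour to $\mathrm{Re}(s)=-2M-\tfrac12$ and let $M\to\infty$. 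The integrand is meromorphic with poles exactly at $s=1$, at the nontrivial zeros $\rho=\tfrac12+i\gamma$, and at the trivial zeros $s=-2n$ $(n\ge1)$; there is no pole at $s=0$ because $\zeta(0)=-\tfrac12\neq0$ and $W$ is entire. The residue at $s=1$ is $W(1)=\frac{\vartheta\sqrt a}{1/4+\vartheta^2}(e^{\delta/2}+e^{-\delta/2})$; the residues at the nontrivial zeros contribute, after summing with multiplicity and using $W(\tfrac12+i\gamma)=\frac{2\vartheta a^{i\gamma}\cos(\delta\gamma)}{\vartheta^2-\gamma^2}$, the term $-2\vartheta\sum_\gamma\frac{a^{i\gamma}\cos(\delta\gamma)}{\vartheta^2-\gamma^2}$; and the residues at the trivial zeros give $-\sum_{n\ge1}W(-2n)=-\sum_{n\ge1}\frac{\vartheta a^{-2n-1/2}}{(2n+1/2)^2+\vartheta^2}\big(e^{(2n+1/2)\delta}+e^{-(2n+1/2)\delta}\big)$. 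These are exactly the three terms on the right of \eqref{new explicit formula}. The assumption that $\vartheta$ is not the ordinate of a zero ensures $\gamma^2\neq\vartheta^2$, so this sum over zeros is well defined; and since $W(\tfrac12+i\gamma)=O(|\gamma|^{-2})$ while $\sum_\rho(1+|\mathrm{Im}\,\rho|)^{-2}<\infty$ by the density bound $N(T+1)-N(T)=O(\log T)$, it converges absolutely, so no pairing of $\rho$ with $\bar\rho$ is required.

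The only real work, and the step I expect to be the main obstacle, is showing that the remaining sides of the contour contribute nothing in the limit. On $\mathrm{Re}(s)=-2M-\tfrac12$ one has $|W(s)|\le 2\vartheta\,(e^{\delta}/a)^{2M+1}\big/|(s-\tfrac12)^2+\vartheta^2|$, which decays geometrically since $a>e^{\delta}$; combined with the standard bound $-\zeta'/\zeta(s)=O(\log(|s|+2))$ on these lines (from the functional equation: the $\Gamma'/\Gamma(1-s)$ term is $O(\log)$ by Stirling, the $\tan(\pi s/2)$ term is $O(1)$ because the lines stay a fixed distance from its poles, and $\zeta'/\zeta(1-s)=O(2^{-2M})$), the left vertical integral is $O\big((e^{\delta}/a)^{2M+1}\log M\big)\to0$. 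For the horizontal sides one chooses heights $T=T_M\to\infty$ avoiding ordinates of zeros, so that $-\zeta'/\zeta(s)=O(\log^2 T_M)$ there (again by $N(T+1)-N(T)=O(\log T)$); with $W(s)=O(T_M^{-2})$ these sides contribute $O(\log^2 T_M/T_M)\to0$. Applying the residue theorem to the rectangle with corners $c\pm iT_M$ and $-2M-\tfrac12\pm iT_M$ and letting $M\to\infty$ then yields \eqref{new explicit formula}: the right side of the rectangle converges to $\tfrac{1}{2\pi i}\int_{(c)}$ (the tail of an absolutely convergent integral), the other three sides vanish, and the residue sum converges to the total of the residues computed above. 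The bookkeeping for the $\zeta'/\zeta$-bounds in the left half-plane and the choice of the heights $T_M$ is the genuine technical content; everything else is the explicit residue calculation.
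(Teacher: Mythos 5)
Your proof is correct and is essentially the same Mellin-inversion-plus-residue argument as the paper's: after the change of variable $w = s - \tfrac12$, your integrand $-\tfrac{\zeta'}{\zeta}(s)W(s)$ on $\mathrm{Re}(s)=c>1$ is literally the paper's $-\tfrac{\zeta'}{\zeta}(w+\tfrac12)\,a^w\,\tfrac{\vartheta}{w^2+\vartheta^2}(e^{\delta w}+e^{-\delta w})$ on $\mathrm{Re}(w)=c-\tfrac12$, the only cosmetic difference being that you build the cosine directly into the Mellin weight while the paper obtains it by adding the $\pm\vartheta$ instances of the standard box kernel. You also fill in the contour-shift estimates that the paper delegates to Davenport; these are fine and complete the argument.
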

\begin{proof} 
For any $c>0$, $\delta >0$ and $\xi >0$ we have
$$ 
\frac{1}{2\pi i} \int_{c - i\infty}^{c + i \infty} \xi^s \left( \frac{e^{\delta s} - e^{-\delta s}}{s} \right)\ds = \begin{cases} 
1, &\text{if  } e^{-\delta} <  \xi < e^{\delta}, \\ 
1/2,& \text{if  } \xi = e^{\pm\delta}, \\
0, &\text{otherwise}.
\end{cases} 
$$ 
It then follows, for any $c>1/2$, $a>0$, $\delta>0$ (assuming $a e^{\pm \delta} \not \in \mathbb{N}$), and any real number $\vartheta$, that 
$$ 
\frac{1}{2\pi i} \int_{c-i\infty}^{c+i\infty} -\frac{\zeta^{\prime}}{\zeta}(s+\tfrac12 + i\vartheta) \, a^{s+i\vartheta} \left( \frac{e^{\delta s}-e^{-\delta s}}{s} \right)  \ds 
\ = \sum_{a e^{-\delta} \le n \le a e^{\delta} } \frac{\Lambda(n)}{\sqrt{n}} \Big(\frac{a}{n} \Big)^{i\vartheta}\,.
$$ 
For details on this calculation we refer to \cite[Chapter 17]{Dav}. Applying this formula at $\vartheta$ and $-\vartheta$ and then adding, we deduce that
\begin{equation} \label{not simplified integral}
\begin{split}
2&\!\!\!\sum_{a e^{-\delta} \le n \le a e^{\delta}} \frac{\Lambda(n)}{\sqrt{n}} \cos \left( \vartheta \log \frac an\right) 
\\
&\quad= \frac{1}{2\pi i} \int_{c-i\infty}^{c+i\infty} -\frac{\zeta^{\prime}}{\zeta}(w+\tfrac 12) \, a^w \left( \frac{e^{\delta(w-i\vartheta)}\!-\!e^{-\delta(w-i\vartheta)}}{w-i\vartheta} + 
\frac{e^{\delta(w+i\vartheta)}\!-\!e^{-\delta(w+i\vartheta)}}{w+i\vartheta} \right) \, \dw.
\end{split}
\end{equation}
In the case $\vartheta \delta =\pi/2$, after dividing by 2, this formula simplifies to 
\begin{equation} 
\label{simplified integral}
\sum_{a e^{-\delta} \le n \le a e^{\delta}} \frac{\Lambda(n)}{\sqrt{n}} \cos \Big( \vartheta \log \frac an\Big) = \frac{1}{2\pi i} \int_{c-i\infty}^{c+i\infty} -\frac{\zeta^{\prime}}{\zeta}(w+\tfrac 12) \, a^w \frac{\vartheta}{w^2 +\vartheta^2} \big(e^{\delta w} + e^{-\delta w}\big) \, \dw ;
\end{equation} 
note the removable singularities of the integrand at $w=\pm i \vartheta$ and that the formula now holds when $a e^{\pm \delta} \in \mathbb{N}$, as well. Since $a> e^\delta$, we can shift the line of integration left from $\mathrm{Re}(w)=c$ to $\mathrm{Re}(w)=-\infty$ and, using the calculus of residues, the integral in \eqref{simplified integral} equals
\[
 \frac{\vartheta \sqrt{a}}{\frac{1}{4}+\vartheta^2} \big(e^{\delta/2} +e^{-\delta/2}\big) -  2 \vartheta \sum_{\gamma} \frac{ a^{i\gamma} \cos (\delta \gamma)}{\vartheta^2 -\gamma^2} 
 - \sum_{n=1}^\infty \frac{ \vartheta \, a^{-2n-1/2}}{(2n+\frac{1}{2})^2+\vartheta^2}\big(e^{(2n+1/2) \delta} +e^{-(2n+1/2)\delta}\big).
\]
Combining estimates, the lemma follows.
\end{proof}

\begin{remark}
Slightly more generally, if $\vartheta \delta \equiv \frac{\pi}{2} \! \pmod \pi$, then we can also evaluate the integral in \eqref{not simplified integral} in terms of an absolutely convergent sum over the nontrivial zeros of $\zeta(s)$ (but not otherwise).  
\end{remark}

\smallskip

Since $\vartheta \delta = \pi/2$, the first term on the right-hand side of  \eqref{new explicit formula} is
\[
\frac{\vartheta \sqrt{a}}{\frac{1}{4}+\vartheta^2} (e^{\delta/2}\!+\!e^{-\delta/2}) = 2 \pi \sqrt{a} \left( \frac{\delta \, (e^{\delta/2}\!+\! e^{-\delta/2})}{\pi^2 +\delta^2} \right) \ge 2 \pi \sqrt{a} \left( \frac{2 \delta}{\pi^2} \right) = \frac{4 \delta \sqrt{a}}{\pi}.
\]
Our assumptions below imply that $e^\delta/a \le 1/\sqrt{3}$, so the third term on the right-hand side of \eqref{new explicit formula} is bounded in absolute value by
\[
\frac{2}{\vartheta}\left(\frac{e^\delta}{a}\right)^{5/2} \sum_{n=0}^\infty \left(\frac{e^\delta}{a}\right)^{2n} \le \frac{2}{\vartheta}\left(\frac{e^\delta}{a}\right)^{5/2} \sum_{n=0}^\infty \left(\frac{1}{3}\right)^{n} = \frac{3}{\vartheta}\left(\frac{e^\delta}{a}\right)^{5/2}.
\]
Hence, taking absolute values in \eqref{new explicit formula} and using the previous two estimates, it follows that
\begin{equation} \label{before variable change}
\frac{4 \delta \sqrt{a}}{\pi} \ \le  \sum_{a e^{-\delta} \le n \le a e^{\delta}} \frac{\Lambda(n)}{\sqrt{n}} \cos \left( \vartheta \log \frac an\right) \, +\,  2 \vartheta \sum_{\gamma} \left|\frac{\cos(\delta \gamma)}{\vartheta^2-\gamma^2}\right| \, + \,   \frac{3}{\vartheta}\left(\frac{e^\delta}{a}\right)^{5/2}.
\end{equation}
At this point, it is convenient to make a change of variables so that we can retrace our steps from the proof of Theorem \ref{Thm3} in Section \ref{Sec3} using a dilation of the Fourier transform pair
\[
H(x) = \frac{\cos(2\pi x)}{1 - 16x^2} \quad  \text{and} \quad \widehat{H}(t) = \frac{\pi}{4} \cos\left(\frac{\pi t}{2}\right)\,\chi_{[-1, 1]}(t).
\]
We set $f(x)=\Delta F(\Delta x)$ where $F(x) = H(x/\lambda)$ so that $\widehat{F}(t) = \lambda \widehat{H}(\lambda t)$. Then, letting
\[
\delta = \frac{2\pi \Delta}{\lambda} \quad \text{and} \quad \vartheta = \frac{\lambda}{4 \Delta}
\]
in \eqref{before variable change} (note that $\vartheta \delta =\pi/2$), after a little rearranging it follows that
\begin{equation} \label{after variable change}
\Delta \sqrt{a} \ \le \  \sum_{\gamma} | f(\gamma)| \, +\,      \frac{1}{2\pi}\sum_{ n \ge 2 } \frac{\Lambda(n)}{\sqrt{n}} \widehat{F}\left( \frac{\log(n/a)}{2\pi\Delta} \right) \, + \,    \frac{3}{2} \Delta \left(\frac{e^{2\pi\Delta/\lambda}}{a}\right)^{5/2}.
\end{equation}
Note that the sum over $n$ is supported on the interval $\big(a e^{-2\pi \Delta/\lambda},a e^{2\pi \Delta/\lambda}\big)$. 

\smallskip

We assume that there are no primes in the interval $[x, x + c \sqrt{x}\log x]$ for $\frac{1}{2}\le c \le 1$, and we choose $a$ and $\Delta$ to satisfy \eqref{May01_11:47am}. In particular, the equalities in \eqref{Def_Delta_in_terms_of_x} and \eqref{Def_a_in_terms_of_x} still hold. As mentioned at the end of the introduction, we may assume that $x \geq 4 \cdot 10^{18}$. Using the fact that $\log(1 +y) \leq y$ for $y\ge 0$ in \eqref{Def_Delta_in_terms_of_x}, we note that
$\Delta \leq \frac{1}{4\pi} \left(\frac{\log x}{\sqrt{x}} \right) < 10^{-8}.$

\subsection{Sum over zeros}
We now explicitly estimate the sum over the zeros of the zeta function on the right-hand side of \eqref{after variable change}.
\begin{lemma}\label{Lem10}
Let $N(x)$ denote the number of zeros $\rho = \beta + i\gamma$ of $\zeta(s)$ with $0 < \gamma \leq x$. Then 
\begin{equation*}
\left|N(x) - \frac{x}{2\pi} \log\frac{x}{2\pi e} - \frac{7}{8} \right| \leq 0.15\log x + 3
\end{equation*}
for $x \geq e$.
\end{lemma}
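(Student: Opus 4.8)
The plan is to carry out the classical Riemann--von Mangoldt argument for $N(x)$ with every error term made effective. Assuming (as we may) that $x$ is not the ordinate of a zero of $\zeta$, I would apply the argument principle to the completed zeta function $\xi(s)=\tfrac12 s(s-1)\pi^{-s/2}\Gamma(s/2)\zeta(s)$ over the rectangle with vertices $-1,\,2,\,2+ix,\,-1+ix$, so that the total change of $\arg\xi$ around the boundary equals $2\pi N(x)$. Exploiting the functional equation $\xi(s)=\xi(1-s)$ together with the fact that $\xi$ is real and nonvanishing on the real segment inside this rectangle, this change reduces to a computation of the variation of $\arg\xi$ along the broken line from $2$ to $2+ix$ to $\tfrac12+ix$, and one recovers the exact identity
\begin{equation*}
N(x)=\frac{1}{\pi}\,\theta(x)+1+S(x),
\end{equation*}
where $\theta(x)=\mathrm{Im}\log\Gamma\!\bigl(\tfrac14+\tfrac{ix}{2}\bigr)-\tfrac{x}{2}\log\pi$ is the Riemann--Siegel theta function and $S(x)=\tfrac1\pi\arg\zeta\!\bigl(\tfrac12+ix\bigr)$ is the usual argument defined by continuous variation starting from $\sigma=2$. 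For $x$ equal to an ordinate of a zero one uses $N(x)=\tfrac12\bigl(N(x^+)+N(x^-)\bigr)$, which by continuity of the right-hand side of the asserted bound causes no loss.

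Next I would evaluate $\tfrac1\pi\theta(x)+1$ using an explicit form of Stirling's formula, say $\log\Gamma(z)=(z-\tfrac12)\log z-z+\tfrac12\log 2\pi+\mu(z)$ with Binet's explicit bound $|\mu(z)|\le c/|z|$ valid for $\mathrm{Re}\,z\ge\tfrac14$. Taking imaginary parts on the ray $z=\tfrac14+\tfrac{ix}{2}$ yields
\begin{equation*}
\frac{1}{\pi}\,\theta(x)+1=\frac{x}{2\pi}\log\frac{x}{2\pi e}+\frac78+\varrho(x),\qquad |\varrho(x)|\le\frac{c_0}{x}\quad(x\ge e),
\end{equation*}
for a small explicit constant $c_0$. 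Hence the lemma reduces to the bound $|S(x)|\le 0.15\log x+3-c_0/x$ for all $x\ge e$.

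The remaining and genuinely substantive step is the explicit bound on $S(x)$, which I would obtain by Backlund's method: apply Jensen's formula to the function $g(s)=\tfrac12\bigl(\zeta(s+ix)+\zeta(s-ix)\bigr)$, which is real on the real axis, on a family of discs centred at a point $\sigma_0>1$, so that counting the sign changes of $\mathrm{Re}\,\zeta(\sigma+ix)$ for $\tfrac12\le\sigma\le\sigma_0$ controls the variation of $\arg\zeta$ along the critical line. Feeding in an effective upper bound $|\zeta(\sigma+it)|\le C|t|^{\kappa}$ in a strip just to the right of the critical line, obtained from Euler--Maclaurin summation, produces an estimate of the shape $|S(x)|\le a\log x+b\log\log x+d$ with explicit $a,b,d$; a careful optimisation of the radii and of the abscissa $\sigma_0$ brings $a$ below $0.15$, after which the $\log\log x$ and constant contributions are absorbed into the $3$ for every $x\ge e$ (alternatively one may simply quote an explicit $S(x)$ bound from the literature, such as those of Rosser or Trudgian, whose leading constant already lies below $0.15$). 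Finally, in the initial range $e\le x<14.134\ldots$, below the ordinate of the first zero, one has $N(x)=0$, so the inequality becomes the elementary estimate $\bigl|\tfrac{x}{2\pi}\log\tfrac{x}{2\pi e}+\tfrac78\bigr|\le 0.15\log x+3$, which holds with large room to spare (the left side stays below $1$). The main obstacle is exactly the $S(x)$ bound with the constant $0.15$: this is where the explicit bookkeeping in Backlund's argument---the effective bound for $\zeta$ near the critical line, and a good choice of the parameters---must be done with care.
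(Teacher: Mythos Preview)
Your approach is correct but far more elaborate than what the paper does. The paper simply quotes Trudgian's explicit bound
\[
\left|N(x) - \frac{x}{2\pi}\log\frac{x}{2\pi e} - \frac{7}{8}\right| \le 0.112\log x + 0.278\log\log x + 2.51 + \frac{0.2}{x}
\qquad(x\ge e),
\]
and then absorbs the lower-order terms via the elementary inequalities $0.278\log\log x \le 0.038\log x + 0.28$ (valid for $x\ge e$) and $0.2/x \le 0.02$ (valid for $x\ge 10$), handling $e\le x\le 10$ directly since $N(10)=0$. You propose instead to rederive the Riemann--von Mangoldt formula with explicit constants and to run Backlund's Jensen-formula argument for $S(x)$ from scratch; this would indeed work (and you even note that one may alternatively quote Trudgian), but the paper treats the lemma as a two-line corollary of an existing result rather than as an object requiring independent proof. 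The advantage of your route is that it is self-contained; the advantage of the paper's is brevity, since the hard analytic work has already been done in the cited reference.
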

\begin{proof}
The result holds for $e \leq x \leq 10$, since $N(10) = 0$. From  \cite[Corollary 1]{T} we have
\begin{equation}\label{April28_2:44pm}
\left|N(x) - \frac{x}{2\pi} \log\frac{x}{2\pi e} - \frac{7}{8} \right| \leq  0.112\log x + 0.278 \log \log x + 2.51 + \frac{0.2}{x},
\end{equation}
which holds for all $x \geq e$. The estimate 
\begin{equation}\label{April28_2:45pm}
0.278 \log \log x\leq 0.038\log x + 0.28
\end{equation}
holds for all $x \geq e$, while
\begin{equation}\label{April28_2:46pm}
\frac{0.2}{x} \leq 0.02
\end{equation}
holds for $x \geq 10$. Combining \eqref{April28_2:44pm}, \eqref{April28_2:45pm}, and \eqref{April28_2:46pm}, we arrive at our desired bound for $x \geq10$.
\end{proof}
Write
\begin{equation*}
N(x) = \frac{x}{2\pi} \log\frac{x}{2\pi e} + \frac{7}{8} + R(x)
\end{equation*}
and let $x_0 = 9.676...$ be such that 
$$
\frac{x_0}{2\pi} \log\frac{x_0}{2\pi e} + \frac{7}{8} = 0.
$$
Then, assuming the Riemann hypothesis and using summation by parts and Lemma \ref{Lem10}, we have
\begin{align*}
\sum_{\gamma>0} |f(\gamma)| & = \int_{x_0}^{\infty}  \left(\frac{1}{2\pi} \log\frac{x}{2\pi} \right) \,|f(x)|\,\dx- \int_{x_0}^{\infty}  R(x) \,|f|'(x)\,\dx\\
& \leq \int_{x_0}^{\infty}  \left(\frac{1}{2\pi} \log\frac{x}{2\pi} \right) \,|f(x)|\,\dx + \int_{x_0}^{\infty}  (0.15\log x  + 3)\,|f'(x)|\,\dx\\
& = \int_{\Delta x_0}^{\infty}  \left(\frac{1}{2\pi} \log\frac{y}{2\pi\Delta} \right) \,|F(y)|\,\dy + \Delta \int_{\Delta x_0}^{\infty}  (0.15\log \tfrac{y}{\Delta}  + 3)\,|F'(y)|\,\dy
\end{align*}
Therefore
\begin{align*}
\sum_{\gamma>0} |f(\gamma)|& \leq \frac{1}{2\pi} \int_{0}^{\infty} \log^+\! y  \,|F(y)|\,\dy + \frac{1}{2\pi}  \log(1/2 \pi \Delta) \int_{0}^{\infty}  |F(y)|\,\dy \\
 &  \ \ \ \ \ \ \ +  (0.15)\Delta \int_{0}^{\infty}  \log^+ y\,|F'(y)|\,\dy+  (0.15)\Delta \log(1/\Delta) \int_{0}^{\infty}|F'(y)|\,\dy \\
 & \ \ \ \ \ \ \  \ \ \ \ \ \ \ + 3\Delta\int_{0}^{\infty}|F'(y)|\,\dy.
 \end{align*}
 The same bound holds for the zeros with $\gamma <0$. Since $\Delta < 10^{-8}$ implies that $\Delta \log (1/\Delta) \leq 2 \times 10^{-7}$, we conclude that
 \begin{align}\label{May01_12:12pm}
 \begin{split}
\sum_{\gamma} |f(\gamma)| & \leq \frac{\log(1/2\pi \Delta)}{2\pi} \|F\|_1 + \frac{1}{2\pi} \| \log^+ \!|y|  \,F(y)\|_1 + (0.15) \times 10^{-8} \times \| \log^+\!|y|  \,F'(y)\|_1 \\
 &  \ \ \ \ \ \ \ \ \ \ \ \ \ \ \ + (3\times 10^{-8} + (0.15) \times 2 \times 10^{-7}) \|F'\|_1\\
& < \frac{\log(1/2\pi \Delta)}{2\pi} \|F\|_1+ \frac{0.070}{2\pi}.
\end{split}
\end{align}

\subsection{Sum over prime powers} \label{prime powers section}
We use a version of the Brun-Titchmarsh inequality due to Montgomery and Vaughan \cite[Theorem 2]{MV} which states that
\begin{equation}\label{Jun14_5:03pm}
\pi(x + y) - \pi(x) <  \frac{2y}{\log y},
\end{equation}
for all $x, y >1$.  For us, the relevant range is $y \ge \sqrt{x}$, so that \eqref{Jun14_5:03pm} corresponds to 
an application of the Brun-Titchmarsh inequality with the bound ${\bf B} \le 4$. This is slightly worse than Iwaniec's bound \eqref{May31_12:58pm} but is completely explicit. With $A=4$,  the lower bound in \eqref{May31_11:57pm} was established in \S \ref{Jun14_2:56pm} with a dilation of the function $H(x)$ with dilation parameter $\lambda=\lambda(4) = 0.892422....$, leading to the bound ${\mc C}(4) \geq 1.141186... = (0.8762...)^{-1}$. For the sake of simplicity, we work instead with the dilation parameter $\lambda = 0.9$ and note that for $F(x) = H(x/\lambda)$ we have
\begin{align}\label{June15_12:00pm}
J(F) = \frac{F(0) - 4\int_{[-1,1]^c} \big|\widehat{F}(t)\big|\,\dt}{\|F\|_{1}} = 1.1405... > \frac{25}{22}.
\end{align}

With $a$ and $\Delta$ chosen as in \eqref{May01_11:47am}, we need to estimate the contribution of the primes $p$ such that $1 < |\frac{\log p/a}{2\pi \Delta}| \leq \lambda^{-1}$  to the sum over $n$ in \eqref{after variable change}. We cover the interval $(a\,e^{2\pi \Delta},a\,e^{2\pi \Delta \lambda^{-1}}] \subset \cup_{j=0}^{J-1}(x_j, x_{j+1}]$, with $x_0 = a\,e^{2\pi \Delta}$ and $x_{j+1} = x_j + \sqrt{x_j}$. Using \eqref{Jun14_5:03pm} in each subinterval $(x_j, x_{j+1}]$ and the fact that $\widehat{F}$ is decreasing on $[0,\lambda^{-1}]$ we obtain
\begin{align}
\allowdisplaybreaks
\sum_{1 < \frac{\log p/a}{2\pi \Delta}  \leq \lambda^{-1}}& \frac{\log p}{\sqrt{p}} \, \widehat{F}\left( \frac{\log(p/a)}{2\pi \Delta} \right)  \leq \sum_{j=0}^{J-1} \left(\frac{\log x_j}{\sqrt{x_j}} \, \widehat{F}\left( \frac{\log (x_j/a)}{2\pi \Delta} \right) \right)\frac{4 \sqrt{x_j}}{\log x_j} \nonumber \\
& \leq 4 \widehat{F}(1) + \frac{4}{\sqrt{a}} \sum_{j=1}^{J-1}  \widehat{F}\left( \frac{\log (x_j/a)}{2\pi \Delta}\right) \sqrt{x_{j-1}} \nonumber  \\
&  \leq 4 \widehat{F}(1) +  \frac{4}{\sqrt{a}} \int_{x_0 }^{x_J}\widehat{F}\left( \frac{\log (t/a)}{2\pi \Delta} \right) \,\dt \label{June15_11:46am} \\
& = 4 \widehat{F}(1)  + 4 \sqrt{a} \,(2\pi \Delta) \int_{1}^{\lambda^{-1}} \widehat{F}(y) \, e^{2\pi \Delta y}\,\dy  \nonumber \\
& =  4 \widehat{F}(1)  + 4 \sqrt{a} \,(2\pi \Delta) \int_{1}^{\lambda^{-1}} \widehat{F}(y) \,\dy + 4 \sqrt{a} \,(2\pi \Delta) \int_{1}^{\lambda^{-1}} \widehat{F}(y) \, \left(e^{2\pi \Delta y} - 1\right)\,\dy \nonumber \\
& \leq 4 \widehat{F}(1)  + 4 \sqrt{a} \,(2\pi \Delta)\left( \int_{1}^{\lambda^{-1}} \widehat{F}(y) \,\dy \right) + 4 \sqrt{a} \,(4\pi \Delta)^2\widehat{F}(1) (\lambda^{-1} - 1), \nonumber 
\end{align}
where we have used the basic estimate $e^{x}-1 \leq 2x$, for $x \leq 1$, in the last passage. We treat the other interval in a similar way, covering $[a\,e^{-2\pi \Delta \lambda^{-1}},a\,e^{-2\pi \Delta }) \subset \cup_{j=0}^{L-1}[x_{j+1}, x_{j})$, with $x_0 = a\,e^{-2\pi \Delta}$ and $x_{j} = x_{j+1} + \sqrt{x_{j+1}}$. Using \eqref{Jun14_5:03pm} in each subinterval $[x_{j+1}, x_{j})$ and the fact that $\widehat{F}$ is increasing on $[-\lambda^{-1},0]$ we obtain
\begin{align}
 \sum_{-\lambda^{-1}< \frac{\log p/a}{2\pi \Delta}  \leq -1}& \frac{\log p}{\sqrt{p}} \, \widehat{F}\left( \frac{\log(p/a)}{2\pi \Delta} \right)  \leq \sum_{j=0}^{L-1} \left(\frac{\log x_{j+1}}{\sqrt{x_{j+1}}} \, \widehat{F}\left( \frac{\log (x_j/a)}{2\pi \Delta} \right) \right)\frac{4 \sqrt{x_{j+1}}}{\log x_{j+1}} \nonumber \\
& \leq 4 \widehat{F}(-1) + \frac{4}{\sqrt{a/(e^{4\pi\Delta})}} \sum_{j=1}^{L-1}  \widehat{F}\left( \frac{\log (x_j/a)}{2\pi \Delta}\right) \sqrt{x_{j}}  \nonumber \\
&  \leq 4 \widehat{F}(-1) +  \frac{4e^{2\pi\Delta}}{\sqrt{a}} \int_{x_L }^{x_0}\widehat{F}\left( \frac{\log (t/a)}{2\pi \Delta} \right) \,\dt  \label{June15_11:47am}\\
& \leq 4 \widehat{F}(-1)  + 4 \sqrt{a} \,e^{2\pi\Delta}\,(2\pi \Delta) \int_{-\lambda^{-1}}^{-1} \widehat{F}(y)\, \dy \nonumber \\
& \leq 4 \widehat{F}(-1)  + 4 \sqrt{a} \,(2\pi \Delta) \int_{-\lambda^{-1}}^{-1} \widehat{F}(y)\, \dy\ + 8 \sqrt{a} \,(2\pi \Delta)^2\,\widehat{F}(-1) (\lambda^{-1} - 1).\nonumber 
\end{align}
Combining \eqref{June15_11:46am} and \eqref{June15_11:47am} we conclude that
\begin{align}\label{June15_11:54am}
\begin{split}
 \sum_{1 < |\frac{\log p/a}{2\pi \Delta} | \leq \lambda^{-1}}& \frac{\log p}{\sqrt{p}} \, \widehat{F}\left( \frac{\log(p/a)}{2\pi \Delta} \right)  \\
 & \leq 8 \widehat{F}(1)  + 4  \sqrt{a} \,(2\pi \Delta) \int_{[-1,1]^c} \widehat{F}(y)\, \dy + 24\sqrt{a} \,(2\pi \Delta)^2\,\widehat{F}(1) (\lambda^{-1} - 1)\\
 & \leq 0.886 + 4  \sqrt{a} \,(2\pi \Delta) \int_{[-1,1]^c} \widehat{F}(y)\, \dy. 
 \end{split}
 \end{align}
Here we have used the estimate $8 \widehat{F}(1) \le 0.885$ along with the inequalities $a\le 4x$ and $\log(1+y) \le y$ for $y\ge 0$ in \eqref{Def_Delta_in_terms_of_x} and \eqref{Def_a_in_terms_of_x} to see that $\sqrt{a} \,(2\pi \Delta)^2 \le \frac{c^2 \log^2x}{\sqrt{x}} \le 10^{-7}$ for $c\le 1$ and $x \ge 4\cdot 10^{18}$ and thus that $24\sqrt{a} \,(2\pi \Delta)^2\,\widehat{F}(1) (\lambda^{-1} - 1) \le 0.001$.

\smallskip
 
Since $\supp(\widehat{F}) \subset [-\lambda^{-1}, \lambda^{-1}] \subset [-2,2]$  and 
$|{\widehat F}(y)|\le \pi \lambda/4<1$, the contribution from the prime powers $n=p^k$ with $k\ge 2$ to the sum over $n$ in \eqref{after variable change} is
$$ 
\le \ \sum_{k \ge 2} \sum_{\substack{ ae^{-4\pi \Delta} \le n  \le  ae^{4\pi \Delta} \\ n=p^k} } \frac{\log p}{\sqrt{n}}  
\ \le \ \frac{\log (ae^{-4\pi\Delta})}{2\sqrt{ae^{-4\pi \Delta}}} \!\!\!\!\!\! \sum_{\substack{ k\ge 2 \\ k\le \log (ae^{4\pi \Delta})/\log 2}} 
\!\!\!\!\!\!  \Big(1 + a^{\frac 1k} (e^{4\pi \Delta/k} - e^{-4\pi \Delta/k}) \Big), 
$$
where we used a trivial estimate for the total number of $k$th powers that can lie in the interval $[ae^{-4\pi \Delta}, ae^{4\pi \Delta}]$.  This is readily bounded by 
\begin{align} 
&\le \frac{\log (ae^{-4\pi \Delta})}{2\sqrt{ae^{-4\pi \Delta}} } \Big (2 \sqrt{a} \big(e^{2\pi \Delta }-e^{-2\pi \Delta}\big) \big)\frac{\log (ae^{4\pi \Delta})}{\log 2}  = \frac{\log (ae^{-4\pi \Delta})\log (ae^{4\pi \Delta})}{a\, e^{-2\pi\Delta}\,\log 2} \big(a e^{2\pi \Delta }-a e^{-2\pi \Delta} \big) \nonumber \\
& =  \frac{\log (ae^{-4\pi \Delta})\log (ae^{4\pi \Delta})}{a \,e^{-2\pi\Delta}\,\log 2} \, c \sqrt{x} \log x < \frac{2 (\log a +1)^3}{\log 2\,\sqrt{a}}  < 0.001. \label{May01:12:13pm}
\end{align}

\subsection{Finishing the proof} Note that $a e^{-2\pi\Delta/\lambda} \ge x/2 \ge 2 \cdot 10^{18}$ and thus
\[
\frac{3}{2} \Delta \left(\frac{e^{2\pi\Delta/\lambda}}{a}\right)^{5/2} \le \, \frac{0.001}{2\pi}.
\]
Combining this estimate with \eqref{after variable change}, \eqref{May01_12:12pm}, \eqref{June15_11:54am}, and \eqref{May01:12:13pm}, (after multiplying both sides by $2\pi$) we derive that 
\begin{align*}
\sqrt{a}\,(2\pi \Delta )&  \leq \log\left(\frac{1}{2\pi \Delta}\right) \|F\|_1+ 4  \sqrt{a} \,(2\pi \Delta) \int_{[-1,1]^c} \widehat{F}(y)\, \dy + 0.958.
\end{align*}
Rearranging and dividing by $\|F\|_1 = \lambda \|H\|_1 = 0.83337\ldots$ we obtain (with $J(F)$ defined in \eqref{June15_12:00pm})
\begin{align}\label{June15:12:09pm}
J(F)\,\sqrt{a}\,(2\pi \Delta) \leq \log\left(\frac{1}{2\pi \Delta}\right) + 1.16.
\end{align}
Since $a \ge x$, $1 \ge c \ge \frac{1}{2}$, and $\log(1+y) \ge y -\frac{y^2}{2}$ for $y\ge 0$, we derive from \eqref{Def_Delta_in_terms_of_x} and \eqref{Def_a_in_terms_of_x} that
\[
\sqrt{a}\,(2\pi \Delta) \ge \frac{c}{2} \log x - \frac{c^2 \log^2 x}{4\sqrt{x}} \ge \frac{c}{2} \log x - 0.001.
\]
Using the inequalities $c\ge \frac{1}{2}$ and $\log(1+y) \ge y \log 2$, which holds for $0\le y \le 1$, it follows that
$2\pi \Delta = \frac{1}{2} \log\left(1 + c \frac{\log x}{\sqrt{x}}\right) \ge \frac{c\log 2}{2} \frac{\log x}{\sqrt{x}}$ 
and therefore
\[
\begin{split}
\log\left(\frac{1}{2\pi\Delta}\right) &\le \log\left( \frac{2}{c\log 2} \frac{\sqrt{x}}{\log x} \right) 
\\
&\le \frac{1}{2}\log x - \log\left( \frac{\log 2}{4} \log( 4\cdot 10^{18}) \right) \le \frac{1}{2} \log x -2.
\end{split}
\]
Inserting these estimates into \eqref{June15:12:09pm}, we derive that
\[
\frac{c \, J(F)}{2} \log x \le \frac{1}{2} \log x - \frac{1}{2}.
\]
This is not possible if $c = \frac{1}{J(F)} < \frac{22}{25}$. Hence there must be a prime in the interval $[x,x+ \frac{22}{25}\sqrt{x} \log x]$. 
\section{Concluding remarks}\label{Sec7}
There are several related extremal problems in Fourier analysis that could be the sources of further investigation. We briefly discuss a few of these here.

\subsection{Multidimensional analogues} The corresponding versions of the extremal problems \eqref{Extremal_Problem_1} -- \eqref{Extremal_Problem_2_infty} in $\R^d$ arise as natural generalizations. The compact interval $[-1,1] \subset\R$ could be replaced by any convex, compact, and symmetric set $K \subset \R^d$, for instance. Of those, the most basic ones are certainly the cube $Q = [-1,1]^d$ and the unit Euclidean ball $B = \{x \in \R^d; |x|\leq1\}$. The same ideas used here could be applied to show the existence of extremizers in this general situation. By averaging over the group of symmetries of $K$, one can show that extremizers admit, without loss of generality, these symmetries. Note that a crucial step in our proof of the uniqueness of extremizers in Section \ref{Sec3_Uniqueness} (for the bandlimited problem \eqref{Extremal_Problem_1_infty}) was the ability to write a nonnegative function with Fourier transform supported in $2K$ as the square of a function whose Fourier transform is supported in $K$. In general, this decomposition is not available for any given $K$, but in the case of the unit ball $B$, with respect to radial functions, this statement holds. This was proved, for instance, in \cite{CL2,HV}, exploring the connection with the theory of Hilbert spaces of entire functions of L. de Branges. Hence, in dimension $d \geq1$ and for $K = B$, one has indeed the uniqueness of radial extremizers (up to multiplication by a complex scalar) for the multidimensional version of \eqref{Extremal_Problem_1_infty}. Letting ${\mc C}_{d,K}(A)$ denote the sharp constant in the multidimensional version of \eqref{Extremal_Problem_1} -- \eqref{Extremal_Problem_1_infty}, one can show that ${\mc C}_{d,Q}(\infty) = {\mc C}(\infty)^d$, and a tensor product of one-dimensional extremizers is an extremizer for the multivariable problem. In the general case, one has ${\mc C}_{d,K}(\infty) \leq {\rm vol}(K)$. A lower bound for ${\mc C}_{d,K}(\infty)$ may come, for instance, from the solution of the ``one-delta problem for $K$", which is the same problem as \eqref{Extremal_Problem_1_infty} with the additional constraint that $F \geq 0$. Such problem is also vastly open, having been solved only in a few particular cases such as the cube $Q$ and the ball $B$ (see the discussion in \cite{BK, GKM,K}). It would be interesting to have refined upper and lower bounds for all of these extremal problems, as we have here in our Theorems \ref{Thm1} and \ref{Thm2}.

\subsection{Sphere packing} The following extremal problem in Fourier analysis was proposed by Cohn and Elkies \cite{CE} in connection to the sphere packing problem. Find
\begin{equation}\label{June15_5:21pm}
C = \sup_{\substack{F \in {\mc E}^+_d \\ F \neq 0}} \frac{F(0)}{\widehat{F}(0)}\,,
\end{equation}
where the supremum is taken over the class ${\mc E}^+_d$ of real-valued, continuous, and integrable functions $F:\R^d \to \R$ with $F\geq 0$ and $\widehat{F}(y) \leq0$ for $|y| \geq 1$. This is the multidimensional analogue of our extremal problem \eqref{Extremal_Problem_2_infty} with the additional constraint that $F \geq0$. By averaging over the group of rotations $SO(d)$ we may restrict the search to radial functions and by following the outline of \S\ref{June08_11:01am} and \S\ref{June16_8:25am} we obtain the next result.\footnote{This result has been previously communicated by E. Carneiro and Alvaro A. Gomez (with a slightly different proof than the one presented here), as part of the M.Sc. thesis of the latter under the supervision of the former.}
\begin{proposition}
There exists a radial extremizer for \eqref{June15_5:21pm}.
\end{proposition}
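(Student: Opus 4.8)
The plan is to transcribe, \emph{mutatis mutandis}, the compactness argument in the proofs of Theorem \ref{Thm1} (c.1) and Theorem \ref{Thm2} (c.1); see \S\ref{June08_11:01am} and \S\ref{June16_8:25am}. Two preliminary remarks make this run smoothly. First, every $F\in{\mc E}^+_d$ automatically has $\widehat F\in L^1(\R^d)$: since $F$ is continuous at the origin, Gauss--Weierstrass summability gives $F(0)=\int_B\widehat F(y)\,\dy-\int_{B^c}|\widehat F(y)|\,\dy$, where $B$ denotes the closed unit ball, and since $\widehat F$ is bounded the first integral is finite, so finiteness of $F(0)$ forces $\int_{B^c}|\widehat F|<\infty$; in particular $F(0)=\int_{\R^d}\widehat F(y)\,\dy$. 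Second, $C$ is finite and positive: from $F(0)=\int_B\widehat F-\int_{B^c}|\widehat F|\le\int_B\widehat F\le{\rm vol}(B)\,\|\widehat F\|_\infty\le{\rm vol}(B)\,\|F\|_1={\rm vol}(B)\,\widehat F(0)$ we get $C\le{\rm vol}(B)$, while the class is nonempty (for instance $F=|G|^2\ge 0$ with $\widehat G$ supported in the ball of radius $\tfrac12$ has $\widehat F$ supported in $B$), so $C>0$.

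Next I would reduce to radial functions by averaging over the Haar measure on $SO(d)$: replacing $F$ by $\widetilde F(x)=\int_{SO(d)}F(Rx)\,\d R$ preserves continuity, integrability, nonnegativity and the condition $\widehat F\le 0$ on $B^c$ (as $B^c$ is rotation-invariant), and leaves $F(0)$ and $\widehat F(0)$ unchanged, so $C$ equals the supremum over radial $F\in{\mc E}^+_d$. Fix a radial extremizing sequence and, after multiplying each term by a positive constant, assume $\widehat{F_n}(0)=\|F_n\|_1=1$ and $F_n(0)\to C$. The sign constraint gives $\int_{B^c}|\widehat{F_n}|=\int_B\widehat{F_n}-F_n(0)\le{\rm vol}(B)$, hence $\|\widehat{F_n}\|_1\le 2\,{\rm vol}(B)$ and, by Plancherel, $\|F_n\|_2^2=\|\widehat{F_n}\|_2^2\le\|\widehat{F_n}\|_\infty\|\widehat{F_n}\|_1\le 2\,{\rm vol}(B)$; thus $\{F_n\}$ is bounded in $L^2(\R^d)$. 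Passing to a subsequence, $F_n\rightharpoonup G$ weakly in $L^2$; then, exactly as in \S\ref{June08_11:01am}, Mazur's lemma gives finite convex combinations $H_k$ of $\{F_n\}_{n\ge k}$ with $H_k\to G$ strongly in $L^2$, and, after a further subsequence, $H_k\to G$ and $\widehat{H_k}\to\widehat G$ pointwise a.e. Each $H_k$ is radial, $\ge 0$, has $\widehat{H_k}\le 0$ on $B^c$, $\|H_k\|_1=\widehat{H_k}(0)=1$, $H_k(0)\to C$, and $\|\widehat{H_k}\|_1\le 2\,{\rm vol}(B)$. By Fatou, $\|G\|_1\le 1$ and $\|\widehat G\|_1\le 2\,{\rm vol}(B)<\infty$, so $G$ coincides a.e. with a continuous function; taking that representative, $G$ is radial, continuous, integrable, $\ge 0$, with $\widehat G$ (continuous, since $G\in L^1$) $\le 0$ on $\{|y|\ge 1\}$, i.e. $G\in{\mc E}^+_d$.

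Finally, to see that $G$ is nonzero and attains the supremum, note that $H_k(0)$ cannot be passed to the limit directly, but the functional survives the splitting $G(0)=\int_B\widehat G-\int_{B^c}|\widehat G|$: on the finite-measure set $B$ the strong $L^2$ convergence yields $\int_B\widehat{H_k}\to\int_B\widehat G$, while Fatou on $B^c$ gives $\int_{B^c}|\widehat G|\le\liminf_k\int_{B^c}|\widehat{H_k}|$; since $\int_B\widehat{H_k}-\int_{B^c}|\widehat{H_k}|=H_k(0)$, combining these gives $G(0)\ge\limsup_k H_k(0)=C>0$. Hence $G\ne 0$, so $\widehat G(0)=\|G\|_1\in(0,1]$ and $G(0)/\widehat G(0)\ge C$; the reverse inequality is the definition of $C$, so $G$ is a radial extremizer. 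I expect the one point needing genuine care to be this last limiting argument together with the a priori bound $\|\widehat{F_n}\|_1\le 2\,{\rm vol}(B)$ that makes it work — this bound, coming from the sign constraint on $\widehat F$, plays the role that compact support of $\widehat F$ played in the bandlimited problem of \S\ref{Sec4.2} — while everything else is a routine adaptation of \S\ref{June08_11:01am} and \S\ref{June16_8:25am}.
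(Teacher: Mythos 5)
Your proof is correct and is exactly the argument the paper sketches: averaging over $SO(d)$ to reduce to radial functions, then transcribing the compactness scheme of \S\ref{June08_11:01am}--\ref{June16_8:25am} (weak $L^2$ limit, Mazur's lemma, pointwise a.e.\ subsequence, Fatou, and the decomposition of the objective into a strongly convergent piece over $B$ plus a Fatou piece over $B^c$). The one point where you depart slightly --- and it is a genuine simplification --- is the source of the a priori $L^1$ bound on $\widehat{F}_n$: in \S\ref{June08_11:01am} this is extracted from near-extremality via \eqref{2.3}--\eqref{2.5}, whereas here you observe, correctly, that the sign constraint $\widehat{F}\le 0$ on $B^c$ together with Gauss--Weierstrass gives $\int_{B^c}|\widehat{F}|=\int_B \widehat{F}-F(0)\le \mathrm{vol}(B)\|F\|_1$ unconditionally, so the bound $\|\widehat{F}_n\|_1\le 2\,\mathrm{vol}(B)$ holds for the whole class after normalization and no near-extremality threshold is needed.
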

As a matter of fact, Cohn and Elkies \cite{CE} proposed this optimization problem over the more restrictive class of admissible functions $F:\R^d \to \R$ such that $|F|$ and $|\widehat{F}|$ are bounded above by constant times $(1 + |x|)^{-d-\delta}$ for some $\delta>0$. Standard approximation arguments show that the sharp constant over this restricted class is the same $C$ in \eqref{June15_5:21pm}, although extremizers of \eqref{June15_5:21pm}, in principle, need not have this particular decay. In addition to dimension $d=1$, the value of the sharp constant in \eqref{June15_5:21pm} is known only in dimensions $d=8$ and $24$ (see \cite{Vi} and \cite{CKMRV}, respectively). The extremizers found by Viazovska in \cite{Vi} and by Cohn, Kumar, Miller, Radchenko, and Viazovska in \cite{CKMRV} are indeed radial Schwartz functions.

\section*{Acknowledgments}
The authors are thankful to Andr\'{e}s Chirre for very helpful discussions during the preparation of this work, to Dan Goldston for encouragement, and to Dimitar Dimitrov for bringing references \cite{AKP, Gor, Gor2, Tai} to our attention. E.C. acknowledges support from CNPq-Brazil, FAPERJ-Brazil, and the Fulbright Junior Faculty Award, and is also thankful to Stanford University for the support and warm hospitality. M.B.M. was supported in part by NSA Young Investigator Grants H98230-15-1-0231 and H98230-16-1-0311, and thanks Stanford University for hosting him on two research visits.  K.S. was
partially supported by NSF grant DMS 1500237, and a Simons Investigator grant from the Simons Foundation.

\end{document}